\title[Ferrari--Spohn diffusions to Airy line ensemble]{Uniform convergence of Dyson Ferrari--Spohn diffusions to the Airy line ensemble}
\author{Evgeni Dimitrov$^\ast$}
\author{Christian Serio$^\dagger$}
\address{$^\ast$Department of Mathematics, University of Southern California, Los Angeles, CA 90007, USA}
\email{$^\ast$edimitro@usc.edu}
\address{$^\dagger$Department of Mathematics, Stanford University, Stanford, CA 94305, USA}
\email{$^\dagger$cdserio@stanford.edu}
\newcommand{\weyl}{W^\circ}
\newtheorem{theorem}{Theorem}
\newtheorem{proposition}{Proposition}[section]
\newtheorem{lemma}{Lemma}[section]
\theoremstyle{definition}
\newtheorem{definition}{Definition}[section]
\theoremstyle{remark}
\newtheorem{remark}{Remark}[section]
\begin{document}
	
	\maketitle
	
	\begin{abstract}
		We consider the Dyson Ferrari--Spohn diffusion $\mathcal{X}^N = (\mathcal{X}^N_1,\dots,\mathcal{X}^N_N)$, consisting of $N$ non-intersecting Ferrari--Spohn diffusions $\mathcal{X}^N_1 > \cdots > \mathcal{X}^N_N > 0$ on $\mathbb{R}$. This object was introduced by Ioffe, Velenik, and Wachtel (2018) as a scaling limit for line ensembles of $N$ non-intersecting random walks above a hard wall with area tilts, which model certain three-dimensional interfaces in statistical physics. It was shown by Ferrari and Shlosman (2023) that as $N\to\infty$, after a spatial shift of order $N^{2/3}$ and constant rescaling in time, the top curve $\mathcal{X}^N_1$ converges to the $\mathrm{Airy}_2$ process in the sense of finite-dimensional distributions. We extend this result by showing that the full ensemble $\mathcal{X}^N$ converges with the same shift and time scaling to the Airy line ensemble in the topology of uniform convergence on compact sets. In our argument we formulate a Brownian Gibbs property with area tilts for $\mathcal{X}^N$, which we show is equivalent after a global parabolic shift to the usual Brownian Gibbs property introduced by Corwin and Hammond (2014).
	\end{abstract}
	
	\tableofcontents

	\section{Introduction and main result}\label{sec1}
	
	\subsection{Background and motivation}\label{sec1.1} In this paper, we consider scaling limits of a certain class of Gibbsian line ensembles consisting of \textit{non-intersecting random walks under area tilts}, which act as effective models for low-temperature interfaces in three-dimensional lattice models in statistical physics. The basic model consists of $N$ mean-zero discrete random walk trajectories $X_1,\dots,X_N$ on an integer interval $\llbracket -M,M\rrbracket = [-M,M]\cap\mathbb{Z}$. The walks are conditioned to remain ordered and stay above a hard wall at zero, i.e., $X_1(j) > \cdots > X_N(j) > 0$ for all $j\in\llbracket -M,M\rrbracket$. Furthermore, each walk is \textit{tilted} by a factor representing the area under it, that is, the probability of the trajectories is weighted by the additional factor
	\begin{equation}\label{geomtilt}
		\exp\left(-\mathfrak{a}\sum_{i=1}^N \mathfrak{b}^{i-1}\frac{1}{M}\sum_{j=-M}^{M-1} X_i(j)\right).
	\end{equation}
	Here $\mathfrak{a}>0$ and $\mathfrak{b}\geq 1$ are fixed parameters. This model was introduced in \cite{IVW} with $\mathfrak{b}=1$ in order to mimic level lines of $(2+1)$-dimensional interfaces, and later developed in \cite{IV, CIW1} with $\mathfrak{b}>1$ to describe in particular fluctuations of level lines of the ($2+1$)D solid-on-solid (SOS) model above a wall on a growing box. A broad discussion of approximations of the above type for level lines of SOS models can be found in the survey \cite[Section 4]{IV}. For further details on the particular case $\mathfrak{b}>1$ and its connections with level lines of the SOS model above a wall, we refer to \cite{SOS2,CIW1,CIW2,DLZ}, and \cite[Section 1.1]{Ser2} for an overview. The case $\mathfrak{b}=1$ mimics level lines of an SOS model with bulk Bernoulli fields and no wall constraint describing crystal growth; see \cite{IV, IS}, as well as \cite{FS} for a discussion in the context of interfaces in the 3D Ising model.

	The number of level lines of the height function grows with the size of the box, so it is natural to consider scaling limits of the random walk model as $M,N\to\infty$. The limiting behavior is notably different depending on whether $\mathfrak{b}=1$ or $\mathfrak{b}>1$. When $\mathfrak{b}>1$, the area tilts are geometrically growing, so that lower curves are pushed downwards more strongly. It has been shown that under diffusive $1$:$2$:$3$ scaling, i.e., rescaling in time by $M^{2/3}$ and in space by $M^{-1/3}$, with suitable boundary conditions the line ensemble converges uniformly on compact sets to a unique limit as $M\to\infty$ first and then $N\to\infty$; see \cite{CIW1, CIW2, DLZ, Ser2}. We note that in this case the scaling need not depend on $N$. The recent work \cite{CG} shows that for the analogous model with random walks replaced by Brownian bridges with no rescaling, this convergence holds for general boundary conditions with $M,N\to\infty$ in any order (this was previously known for zero boundary conditions due to \cite{CIW2}). Due to a lack of integrable structure, little is known about the nature of the scaling limit; mixing (hence ergodicity) and tail estimates are established in \cite{CG}, but a precise description remains open.
	
	In this paper we focus on the scaling limit of the model with $\mathfrak{b}=1$. This model was first analyzed in \cite{ISV} with a single curve, and subsequently in \cite{IVW} with the number of curves $N\geq 1$ fixed. In the former paper it was shown that after the same rescaling described above, a random walk bridge with area tilt converges to an explicit limit known as the \textit{Ferrari--Spohn diffusion}, assuming $O(M^{1/3})$ boundary conditions. The latter paper extended this convergence result to ensembles of $N$ bridges with area tilts, and constructed a limit $\mathcal{X}^N = (\mathcal{X}^N_1,\dots,\mathcal{X}^N_N)$ known as the \textit{Dyson Ferrari--Spohn diffusion}. We will describe these objects in more detail in Section \ref{sec1.2}. For a discussion of the broader role of Ferrari--Spohn diffusions as scaling limits for various interface models in statistical mechanics, we refer to \cite{IV}. 
	
	As the number of curves $N\to\infty$, the top curve $\mathcal{X}^N_1$ of the Dyson Ferrari--Spohn diffusion escapes to infinity and the line ensemble is not tight, unlike in the case $\mathfrak{b}>1$. However, it was shown in \cite{FS} using a determinantal point process analysis that after subtracting an appropriate correction depending on $N$ and rescaling in time, the top curve converges to the $\mathrm{Airy}_2$ process. Our main result, which we state in Section \ref{sec1.3}, is to strengthen this to uniform convergence of the full line ensemble $\mathcal{X}^N$ to the Airy line ensemble.
	
	\subsection{Dyson Ferrari--Spohn diffusions and the Airy line ensemble}\label{sec1.2}
	In this section we introduce the Dyson Ferrari--Spohn diffusions and the Airy line ensemble.
	
	The \textit{Ferrari--Spohn diffusion} was first introduced in \cite{FSpohn}, where it was derived as a scaling limit for Brownian motion constrained to stay above a circular or parabolic barrier. It is the stationary diffusion process on $\mathbb{R}$ whose fixed-time distribution has density proportional to $\mathrm{Ai}(x-\omega_1)^2 \cdot {\bf 1}\{x > 0\}$, and whose infinitesimal generator is given by
	\begin{equation}
		L = \frac{1}{2}\frac{\mathrm{d}^2}{\mathrm{d}x^2} + a(x)\frac{\mathrm{d}}{\mathrm{d}x},
	\end{equation}
	where $a(x) = \frac{\mathrm{d}}{\mathrm{d}x}\log \mathrm{Ai}(x-\omega_1)$. Here, $\mathrm{Ai}$ denotes the Airy function, and $-\omega_1$ its largest zero.
	
	The \textit{Dyson Ferrari--Spohn diffusion} $\mathcal{X}^N$ with $N$ curves was introduced in \cite{IVW} as a collection of $N$ non-intersecting Ferrari--Spohn diffusions. More precisely, $\mathcal{X}^N = (\mathcal{X}_1^N,\dots,\mathcal{X}_N^N)$ is the stationary diffusion on $\mathbb{R}$, whose fixed-time distribution has density proportional to $\Delta(\vec{x})^2 \cdot {\bf 1}\{x_1 > \cdots > x_N > 0\}$, and whose infinitesimal generator is given by
	\begin{equation}\label{S1GEN}
		L_N = \sum_{k=1}^N \left(\frac{1}{2}\frac{\partial^2}{\partial x_k^2} + \frac{\partial\log \Delta(\vec{x})}{\partial x_k} \cdot\frac{\partial}{\partial x_k}\right),
	\end{equation}
	where $\Delta(\vec{x}) = \det[\mathrm{Ai}( x_{N-j+1} - \omega_i)]_{i,j=1}^N$ and  $-\omega_1 > -\omega_2 > \cdots$ are the zeros of the Airy function $\mathrm{Ai}$. We mention here that our definition of $\mathcal{X}^N$ differs slightly from \cite{FS} as our indexing of the curves is reversed. Further descriptions of this process can be found in \cite[Section 2]{IVW} and \cite[Section 3]{FS}, and in Section \ref{sec4.1} below we will specifically utilize the description of $\mathcal{X}^N$ in terms of determinantal point processes from \cite[Section 3.3]{FS}.\\

	The {\em Airy line ensemble} $\mathcal{A}^{\mathrm{Airy}} = \{\mathcal{A}_i^{\mathrm{Airy}}\}_{i = 1}^\infty$ is a collection of infinitely many random continuous curves on $\mathbb{R}$, which was first introduced in \cite{Spohn} and later extensively studied in \cite{CH14}. The Airy line ensemble is a universal scaling limit for various models that belong to the {\em KPZ universality class}, see \cite{CU2} for an expository review of this class.  In \cite{Spohn} the convergence to the Airy line ensemble (in the finite-dimensional sense) was established for the polynuclear growth model and in \cite{CH14} it was shown for Dyson Brownian motion (in a stronger uniform sense). Recently, \cite{DNV} established the uniform convergence of various classical integrable models to the Airy line ensemble including non-colliding random walks with Bernoulli, geometric and Poisson jumps and Brownian last passage percolation. We refer to the introduction of \cite{DNV} for a more extensive discussion of the history, motivation behind and progress on the problem of establishing convergence to the Airy line ensemble. The {\em parabolic Airy line ensemble} is a collection of infinitely many random continuous curves $\mathcal{L}^{\mathrm{Airy}} = \{\mathcal{L}^{\mathrm{Airy}}_i\}_{i = 1}^\infty$ on $\mathbb{R}$ and is related to the Airy line ensemble via $\mathcal{A}^{\mathrm{Airy}}_i(t) = 2^{1/2} \mathcal{L}^{\mathrm{Airy}}_i(t) + t^2$ for $i \in \mathbb{N}$. We mention that \cite{DNV} uses a slightly different notation than what we use in the present paper\textemdash see the discussion at the end of \cite[Section 1.1]{DM}.
	
	There are various ways one can define the parabolic Airy line ensemble $\mathcal{L}^{\mathrm{Airy}}$. A description in terms of determinantal processes can be found in \cite[Definition 2.1]{DNV}. In \cite{CH14} the authors defined $\mathcal{L}^{\mathrm{Airy}}$ as the (\hspace{-.3mm}{\em unique}) edge scaling limit of $N$ non-intersecting Brownian bridges, and they showed that this ensemble enjoys a {\em Brownian Gibbs property}. For a formal definition we refer the reader to \cite[Section 2]{CH14} and \cite[Section 2]{DM}, but in words this property states that locally the structure of the curves in $\mathcal{L}^{\mathrm{Airy}}$ is that of independent Brownian bridges that have been conditioned to avoid each other. From \cite[Theorem 1.1]{DM} we also have that $\mathcal{L}^{\mathrm{Airy}}$ is the unique line ensemble such that: (1) $2^{1/2} \mathcal{L}^{\mathrm{Airy}}_1(t) + t^2$ is equal in distribution to the {\em Airy\hspace{.3mm}$_{2}$ process} from \cite{Spohn}; (2) $\mathcal{L}^{\mathrm{Airy}}$ satisfies the Brownian Gibbs property.
	
	\subsection{Main result}\label{sec1.3}
	
	Our result concerns the scaling limit of $\mathcal{X}^N$ as $N\to\infty$. The first result in this direction was \cite[Theorem 1.1]{FS}, which shows that $\mathcal{X}_1^N(2t) - c_1 N^{2/3}$ converges in finite-dimensional distributions to the $\mathrm{Airy}_2$ process as $N\to\infty$, where $c_1 = (3\pi/2)^{2/3}$. Motivated by this result, we now state our main theorem extending this convergence to the full line ensemble.
	
	\begin{theorem}\label{main}
		Let $\mathcal{X}^N$ denote the Dyson Ferrari--Spohn diffusion with $N$ curves, and define the shifted and time-rescaled line ensemble $\widetilde{\mathcal{X}}^N$ by 
		\begin{equation}\label{FSrescaled}
			\widetilde{\mathcal{X}}_i^N(t) = \mathcal{X}_i^N(2t) - c_1 N^{2/3}, \qquad 1\leq i\leq N,\quad t\in\mathbb{R},
		\end{equation}
		where $c_1 = (3\pi/2)^{2/3}$. Then, $\widetilde{\mathcal{X}}^N$ converges weakly as $N\to\infty$ to the Airy line ensemble $\mathcal{A}^{\mathrm{Airy}}$, in the topology of uniform convergence on compact sets on $C(\mathbb{N}\times\mathbb{R})$.
	\end{theorem}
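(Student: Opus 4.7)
The plan is to apply the Brownian Gibbs uniqueness framework developed in \cite{CH14} and sharpened in \cite{DM}. By \cite[Theorem 1.1]{DM}, the Airy line ensemble is the unique (parabolic) Brownian Gibbs line ensemble whose top curve is the Airy$_2$ process. Combined with \cite[Theorem 1.1]{FS}, which already identifies the finite-dimensional limit of $\widetilde{\mathcal{X}}_1^N$ as Airy$_2$, the theorem reduces to two tasks: (i) tightness of $\widetilde{\mathcal{X}}^N$ in the topology of uniform convergence on compact sets on $C(\mathbb{N}\times\mathbb{R})$; and (ii) the assertion that every subsequential limit satisfies the standard Brownian Gibbs property.

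The structural input is a \emph{Brownian Gibbs property with area tilts} for $\mathcal{X}^N$: conditioned on the neighboring curves $\mathcal{X}^N_k$, $\mathcal{X}^N_{\ell+1}$ and on the values of the curves $\mathcal{X}^N_{k+1},\ldots,\mathcal{X}^N_\ell$ outside some compact interval $[a,b]$, the internal trajectories on $[a,b]$ are distributed as $\ell-k$ non-intersecting Brownian bridges with the prescribed endpoints, reweighted by a factor of the form $\exp\bigl(-\mathfrak{a}\sum_{i=k+1}^\ell \int_a^b \mathcal{X}^N_i(s)\,ds\bigr)$ (with a hard wall at zero if $\ell=N$). I would derive this by combining the Karlin--McGregor non-intersection representation with Girsanov/Cameron--Martin calculations applied to the generator \eqref{S1GEN}, whose invariant density $\Delta(\vec{x})^2$ governs the boundary-law contribution. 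The second claim promised in the abstract is essentially an algebraic identity: after subtracting a parabola and the constant $c_1 N^{2/3}$ from each curve, the area-tilt weight $\exp(-\mathfrak{a}\int \mathcal{X}^N_i)$ combines with the Cameron--Martin factor coming from the parabolic change of variables to produce exactly the unweighted non-intersecting Brownian bridge measure. Thus $\widetilde{\mathcal{X}}^N$ satisfies the ordinary Brownian Gibbs property, placing the problem squarely in the framework of \cite{CH14, DM}.

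For tightness I would first establish one-point control: for each fixed $i\geq 1$ and $t\in\mathbb{R}$, the laws of $\widetilde{\mathcal{X}}^N_i(t)$ are tight in $\mathbb{R}$. The case $i=1$ is \cite[Theorem 1.1]{FS}. For $i\geq 2$, upper tightness is immediate from the ordering $\widetilde{\mathcal{X}}^N_i \leq \widetilde{\mathcal{X}}^N_1$; lower tightness can be obtained either via a Gibbs-based monotone coupling argument as in \cite{CH14, DNV}, or by comparison to non-intersecting Brownian motion via the determinantal description of $\mathcal{X}^N$ from \cite{FS}. Given one-point tightness at every level and the (shifted) Brownian Gibbs property, the standard modulus-of-continuity bootstraps of \cite{DNV, Ser2} upgrade this to full tightness in $C(\mathbb{N}\times\mathbb{R})$.

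The last step is to identify subsequential limits. Preservation of the Brownian Gibbs property under uniform-on-compacts weak limits is by now routine (cf.\ \cite[Section 3]{DM}, \cite{DNV}), so any subsequential limit of $\widetilde{\mathcal{X}}^N$ is a Brownian Gibbs line ensemble; its top curve is Airy$_2$ by \cite{FS}, and \cite[Theorem 1.1]{DM} then forces it to equal $\mathcal{A}^{\mathrm{Airy}}$. I expect the main obstacle to be the derivation of the area-tilted Gibbs property and the precise parabolic-shift equivalence: controlling how the Vandermonde-Airy density $\Delta(\vec{x})^2$ and the associated logarithmic-derivative drift in \eqref{S1GEN} transform under the parabolic change of coordinates, and ensuring that the constant shift $c_1 N^{2/3}$ matches the Cameron--Martin correction so that the area tilt exactly disappears in the limit, will require careful bookkeeping. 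A secondary difficulty is lower tightness for $i\geq 2$, which is not immediate from \cite{FS} and likely needs coupling arguments tailored to the Dyson Ferrari-Spohn ensemble.
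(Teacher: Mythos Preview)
Your outline is broadly sound and overlaps with the paper's argument at the key structural point: both use Cameron--Martin to convert the area-tilted Gibbs property of $\mathcal{X}^N$ into the ordinary (partial) Brownian Gibbs property after a parabolic shift. There are, however, two genuine differences worth noting.

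First, on finite-dimensional input and tightness. You plan to use only the top-curve result of \cite{FS} and then fight for lower-curve tightness separately via monotone couplings. The paper instead observes that the kernel estimates in \cite{FS} (Propositions 3.3 and 3.5 there) already yield finite-dimensional convergence of the \emph{entire} ensemble $\widetilde{\mathcal{X}}^N$ to $\mathcal{A}^{\mathrm{Airy}}$ by a short adaptation of \cite[Proposition 3.1]{DNV}. With full finite-dimensional convergence and the Brownian Gibbs property in hand, \cite[Theorem 3.8 and Proposition 3.6]{CH14} give tightness and uniform convergence as a black box; no separate lower-tightness argument or appeal to the characterization \cite[Theorem 1.1]{DM} is needed. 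Your route is correct in principle but does strictly more work.

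Second, on the derivation of the area-tilted Gibbs property. You propose to extract it directly from the generator \eqref{S1GEN} via Girsanov. The paper instead deduces it by passing to the limit from the discrete random-walk ensembles of \cite{IVW}, for which the area-tilted Gibbs property is immediate, and showing it survives the Brownian limit. The direct generator approach should work but requires care with the hard wall at zero and the logarithmic-derivative drift; the discrete route sidesteps these analytic issues.

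One small correction: the constant shift $c_1 N^{2/3}$ plays no role in the Cameron--Martin bookkeeping. The parabolic shift alone removes the area tilt \emph{exactly for every finite $N$}, not merely in the limit; vertical translations leave both the avoiding-bridge law and the area tilt (up to a constant that cancels in the Radon--Nikodym ratio) unchanged. So there is no ``matching'' to check there.
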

	Our proof of Theorem \ref{main} goes through the following steps. Firstly, we use the determinantal point process description of $\mathcal{X}^N$ from \cite[Section 3]{FS} and show that the estimates on the correlation kernel from that paper are sufficient to prove that $\mathcal{X}^N$ converges in the finite-dimensional sense to $\mathcal{A}^{\mathrm{Airy}}$. This step is a straightforward adaptation of the argument in \cite[Proposition 3.1]{DNV} and its proof is presented in Section \ref{sec4.1}. In order to improve our finite-dimensional convergence to uniform over compacts convergence we seek to apply \cite[Theorem 3.8]{CH14}; however, the immediate obstacle we face is that $\mathcal{X}^N$ does not satisfy the Brownian Gibbs property. Rather, $\mathcal{X}^N$ satisfies a certain area-tilted Brownian Gibbs property. A formal definition of the latter can be found in Definition \ref{DefPBGP}, and the fact that $\mathcal{X}^N$ satisfies such a property is deduced in Section \ref{sec3.2}. Here, we essentially rely on \cite[Theorem A]{IVW}, which shows that $\mathcal{X}^N$ is the scaling limit of area-tilted avoiding random walks, and we show that this area-tilted Gibbs property survives the limit with random walks becoming Brownian motions. Finally, to set ourselves up for the application of \cite[Theorem 3.8]{CH14}, we show that the if one subtracts an appropriate parabola from $\mathcal{X}^N$, then the resulting ensemble satisfies the usual Brownian Gibbs property from \cite{CH14}. This result, formulated generally in Section \ref{sec2.2}, is a consequence of the Cameron-Martin-Girsanov theorem.
	
	While Theorem \ref{main} is the main result of the paper, we hope that Section \ref{sec2} will serve as a useful reference on area-tilted Brownian Gibbsian line ensembles for others working in the area.\\
	
	\noindent {\bf Acknowledgments.} The authors would like to thank Ivan Corwin for suggesting the use of the Cameron-Martin-Girsanov theorem in the proof of Lemma \ref{GibbsPar}, Amir Dembo for helpful discussions and advice, and Senya Shlosman for bringing the paper \cite{FS} to their attention. E.D. is partially supported by NSF grant DMS:2054703.
	
	%
	%
	\section{Line ensembles and area tilts}\label{sec2}
	
	In this section we introduce some basic definitions and technical results used later in the paper.
	
	%
	%
	\subsection{Line ensembles and Brownian Gibbs properties}\label{sec2.1}
	We begin by introducing the notions of a {\em line ensemble} and the (\textit{partial$\,)$ Brownian Gibbs property with respect to $\vec{A}$-area tilts}. Our exposition in this section closely follows that of  \cite[Section 2]{CH16} and \cite[Section 2]{DM}. 
	
	Given two integers $p \leq q$, we let $\llbracket p, q \rrbracket$ denote the set $\{p, p+1, \dots, q\}$. Given an interval $\Lambda \subset \mathbb{R}$, we endow it with the subspace topology of the usual topology on $\mathbb{R}$. We let $(C(\Lambda), \mathcal{C})$ denote the space of continuous functions $f: \Lambda \rightarrow \mathbb{R}$ with the topology of uniform convergence over compacts, see \cite[Chapter 7, Section 46]{Munkres}, and Borel $\sigma$-algebra $\mathcal{C}$. Given a set $\Sigma \subset \mathbb{Z}$, we endow it with the discrete topology and denote by $\Sigma \times \Lambda$ the set of all pairs $(i,x)$ with $i \in \Sigma$ and $x \in \Lambda$ with the product topology. We also denote by $\left(C (\Sigma \times \Lambda), \mathcal{C}_{\Sigma}\right)$ the space of continuous functions on $\Sigma \times \Lambda$ with the topology of uniform convergence over compact sets and Borel $\sigma$-algebra $\mathcal{C}_{\Sigma}$. We will typically take $\Sigma = \llbracket 1, N \rrbracket$ (we use the convention $\Sigma = \mathbb{N}$ if $N = \infty$).
	
	The following defines the notion of a line ensemble.
	\begin{definition}\label{DefLE}
		Let $\Sigma \subset \mathbb{Z}$ and $\Lambda \subset \mathbb{R}$ be an interval. A {\em $\Sigma$-indexed line ensemble $\mathcal{L}$} is a random variable defined on a probability space $(\Omega, \mathcal{F}, \mathbb{P})$ that takes values in $\left(C (\Sigma \times \Lambda), \mathcal{C}_{\Sigma}\right)$. Intuitively, $\mathcal{L}$ is a collection of random continuous curves (sometimes referred to as {\em lines}), indexed by $\Sigma$,  each of which maps $\Lambda$ in $\mathbb{R}$. We will often slightly abuse notation and write $\mathcal{L}: \Sigma \times \Lambda \rightarrow \mathbb{R}$, even though it is not $\mathcal{L}$ which is such a function, but $\mathcal{L}(\omega)$ for every $\omega \in \Omega$. For $i \in \Sigma$ we write $\mathcal{L}_i(\omega) = (\mathcal{L}(\omega))(i, \cdot)$ for the curve of index $i$ and note that the latter is a map $\mathcal{L}_i: \Omega \rightarrow C(\Lambda)$, which is $(\mathcal{C}, \mathcal{F})-$measurable. We call a line ensemble {\em non-intersecting} if $\mathbb{P}$-almost surely $\mathcal{L}_i(r) > \mathcal{L}_j(r)$  for all $i < j$ and $r \in \Lambda$.
	\end{definition}
	
	We next turn to formulating the partial Brownian Gibbs property with respect to $\vec{A}$-area tilts\textemdash we do this in Definition \ref{DefPBGP} after introducing some relevant notation. If $W_t$ denotes a standard one-dimensional Brownian motion, then the process
	$$\tilde{B}(t) =  W_t - t W_1, \hspace{5mm} 0 \leq t \leq 1,$$
	is called a {\em Brownian bridge $($from $\tilde{B}(0) = 0$ to $\tilde{B}(1) = 0)$ with diffusion parameter $1$.} 
	
	Given $a, b,x,y \in \mathbb{R}$ with $a < b$, we define a random variable on $(C([a,b]), \mathcal{C})$ through
	\begin{equation}\label{BBDef}
		B(t) = (b-a)^{1/2}\, \tilde{B} \left( \frac{t - a}{b-a} \right) + \frac{b-t}{b-a} \cdot x + \frac{t- a}{b-a}\cdot y, 
	\end{equation}
	and refer to the law of this random variable as a {\em Brownian bridge $($from $B(a) = x$ to $B(b) = y)$ with diffusion parameter $1$.} Given $k_1, k_2 \in \mathbb{Z}$ with $k_1 \leq k_2$ and $\vec{x}, \vec{y} \in \mathbb{R}^{k_2 - k_1 + 1}$, we let $\mathbb{P}^{k_1, k_2, a,b, \vec{x},\vec{y}}_{\mathrm{free}}$ denote the law of $k_2 - k_1 + 1$ independent Brownian bridges $\{B_i: [a,b] \rightarrow \mathbb{R} \}_{i = k_1}^{k_2}$ from $B_i(a) = x_{i-k_1 +1}$ to $B_i(b) = y_{i - k_1 + 1}$ all with diffusion parameter $1$. We will denote the expectation with respect to $\mathbb{P}^{k_1, k_2, a,b, \vec{x},\vec{y}}_{\mathrm{free}}$ by $\mathbb{E}^{k_1, k_2, a,b, \vec{x},\vec{y}}_{\mathrm{free}}.$ If $k_1=k_2=1$, we omit the first two superscripts.
	
	The following definition introduces the notion of an $(f,g)$-avoiding Brownian line ensemble, which in plain words can be understood as a random ensemble of $k$ independent Brownian bridges, conditioned on not crossing each other and staying above the graph of $g$ and below the graph of $f$ for two continuous functions $f$ and $g$.
	\begin{definition}\label{DefAvoidingLaw}
		Let $k_1, k_2 \in \mathbb{Z}$ with $k_1 \leq k_2$ and put $k = k_2 - k_1 + 1$.  Let $\weyl_k$ denote the open Weyl chamber in $\mathbb{R}^k$, i.e.
		$$\weyl_k = \{ \vec{x} = (x_1, \dots, x_k) \in \mathbb{R}^k: x_1 > x_2 > \cdots > x_k \}.$$
		Let $\vec{x}, \vec{y} \in \weyl_k$, $a,b \in \mathbb{R}$ with $a < b$, and $f: [a,b] \rightarrow (-\infty, \infty]$ and $g: [a,b] \rightarrow [-\infty, \infty)$ be two continuous functions. The latter condition means that either $f: [a,b] \rightarrow \mathbb{R}$ is continuous or $f = \infty$ everywhere, and similarly for $g$. We also assume that $f(t) > g(t)$ for all $t \in[a,b]$, $f(a) > x_1, f(b) > y_1$ and $g(a) < x_k, g(b) < y_k.$
		
		With the above data we define the {\em $(f,g)$-avoiding Brownian line ensemble on the interval $[a,b]$ with entrance data $\vec{x}$ and exit data $\vec{y}$} to be the $\Sigma$-indexed line ensemble $\mathcal{Q}$ with $\Sigma = \llbracket k_1, k_2\rrbracket$ on $\Lambda = [a,b]$ and with the law of $\mathcal{Q}$ equal to $\mathbb{P}^{k_1, k_2, a,b, \vec{x},\vec{y}}_{\mathrm{free}}$ (the law of $k$ independent Brownian bridges $\{B_i: [a,b] \rightarrow \mathbb{R} \}_{i = k_1}^{k_2}$ from $B_i(a) = x_{i - k_1+ 1}$ to $B_i(b) = y_{i-k_1 + 1}$), conditioned on the event 
		$$\left\{ f(r) > B_{k_1}(r) > B_{k_1+1}(r) > \cdots > B_{k_2}(r) > g(r) \mbox{ for all $r \in[a,b]$} \right\}.$$ 
		We refer the interested reader to \cite[Definition 2.4]{DM} for more on why the above conditioning is justified. We denote the probability distribution of $\mathcal{Q}$ as $\mathbb{P}_{\mathrm{avoid}}^{k_1, k_2, a,b, \vec{x}, \vec{y}, f, g}$ and write $\mathbb{E}_{\mathrm{avoid}}^{k_1, k_2, a,b, \vec{x}, \vec{y}, f, g}$ for the expectation with respect to this measure. In particular, we have that 
		\begin{equation}\label{freeToAvoid}
			\frac{\mathrm{d} \mathbb{P}_{\mathrm{avoid}}^{k_1, k_2, a,b, \vec{x}, \vec{y}, f, g}}{\mathrm{d} \mathbb{P}_{\mathrm{free}}^{k_1, k_2, a,b, \vec{x}, \vec{y}}} = \frac{{\bf 1}  \left\{  f(r) > B_{k_1}(r)  > \cdots > B_{k_2}(r) > g(r)  \mbox{ for all $r \in[a,b]$} \right\}}{ \mathbb{P}_{\mathrm{free}}^{k_1, k_2, a,b, \vec{x}, \vec{y}} \left(\left\{  f(r) > B_{k_1}(r) > \cdots > B_{k_2}(r) > g(r)  \mbox{ for all $r \in[a,b]$}  \right\} \right)}.
		\end{equation}
	\end{definition}
	
	The following definition introduces the notion of an $(f,g)$-avoiding Brownian line ensemble with $\vec{A}$-area tilts, which in plain words can be understood as an $(f,g)$-avoiding Brownian line ensemble, whose $i$-th curve is reweighed by the area of the region enclosed by it and $g$. 
	\begin{definition}\label{DefWeightedLaw}
		Let $k_1, k_2, k, a, b, \vec{x}, \vec{y},f,g$ be as in Definition \ref{DefAvoidingLaw} and $g \neq -\infty$. Suppose that $A_{i} \in [0, \infty)$ for $i\in \llbracket k_1,k_2\rrbracket $ and $\vec{A} = (A_{k_1}, \dots, A_{k_2})$. 
		
		With the above data we define the {\em $(f,g)$-avoiding Brownian line ensemble with $\vec{A}$-area tilts on the interval $[a,b]$ with entrance data $\vec{x}$ and exit data $\vec{y}$} to be the $\Sigma$-indexed line ensemble $\mathcal{Q}$ with $\Sigma = \llbracket k_1, k_2\rrbracket$ on $\Lambda = [a,b] $ whose law $\mathbb{P}_{\mathrm{avoid}, \vec{A}}^{k_1, k_2, a,b, \vec{x}, \vec{y}, f,g}$ is given in terms of the  Radon-Nikodym derivative
		\begin{equation}\label{weightedToAvoid}
			\frac{\mathrm{d} \mathbb{P}_{\mathrm{avoid}, \vec{A}}^{k_1, k_2, a,b, \vec{x}, \vec{y}, f,g}}{\mathrm{d}\mathbb{P}^{k_1, k_2, a,b, \vec{x},\vec{y},f,g}_{\mathrm{avoid}}} (\mathcal{Q}_{k_1}, \dots, \mathcal{Q}_{k_2}) =  \frac{1}{Z_{\mathrm{avoid}, \vec{A}}^{k_1, k_2, a,b, \vec{x}, \vec{y}, f,g}}\exp \left( - \sum_{i = k_1}^{k_2} \int_a^b A_i \cdot  \left( \mathcal{Q}_{i}(u) - g(u)\right)\mathrm{d}u \right).
		\end{equation}
		In the last equation, the {\em normalization constant} is given by
		\begin{equation}\label{S2E1}
			Z_{\mathrm{avoid}, \vec{A}}^{k_1, k_2, a,b, \vec{x}, \vec{y}, f,g} = \mathbb{E}^{k_1, k_2, a,b, \vec{x},\vec{y},f,g}_{\mathrm{avoid}}\left[ \exp \left( - \sum_{i = k_1}^{k_2} \int_a^b A_i \cdot \left( \mathcal{Q}_{i}(u) - g(u)\right)\mathrm{d}u \right) \right].
		\end{equation}
		where on the right $(\mathcal{Q}_{k_1}, \dots, \mathcal{Q}_{k_2})$ is distributed according to $\mathbb{P}^{k_1, k_2, a,b, \vec{x},\vec{y},f,g}_{\mathrm{avoid}}$. Notice that since $A_{i} \in [0,\infty)$ for $i\in\llbracket k_1,k_2\rrbracket $, and under $\mathbb{P}^{k_1, k_2, a,b, \vec{x},\vec{y}}_{\mathrm{avoid}}$ the curves $\mathcal{Q}_{k_1}, \dots, \mathcal{Q}_{k_2}$ are all above $g$, we have that the right side of (\ref{weightedToAvoid}) is a continuous function on $C(\Sigma \times \Lambda)$, taking values in $(0,1]$. In particular, $Z_{\mathrm{avoid}, \vec{A}}^{k_1, k_2, a,b, \vec{x}, \vec{y}, f,g}  \in (0,1]$ and the above Radon-Nikodym derivative, and consequently $\mathbb{P}_{\mathrm{avoid}, \vec{A}}^{k_1, k_2, a,b, \vec{x}, \vec{y}, f,g}$, are well-defined. We write $\mathbb{E}_{\mathrm{avoid}, \vec{A}}^{k_1, k_2, a,b, \vec{x}, \vec{y}, f,g}$ for the expectation with respect to $\mathbb{P}_{\mathrm{avoid}, \vec{A}}^{k_1, k_2, a,b, \vec{x}, \vec{y}, f,g}$.
	\end{definition}

	We finally introduce the partial Brownian Gibbs property with respect to $\vec{A}$-area tilts.
	\begin{definition}\label{DefPBGP}
		Fix a set $\Sigma = \llbracket 1 , N \rrbracket$ with $N \in \mathbb{N}$ or $N  = \infty$ and an interval $\Lambda \subset \mathbb{R}$. In addition, let $A_i \in [0, \infty)$ for $i \in \Sigma$ and put $\vec{A} = \{A_i\}_{i \in \Sigma}$.
		
		A $\Sigma$-indexed line ensemble $\mathcal{L}$ on $\Lambda$ is said to satisfy the {\em partial Brownian Gibbs property with respect to $\vec{A}$-area tilts} if and only if it is non-intersecting and for any finite $K = \{k_1, k_1 + 1, \dots, k_2 \} \subset \Sigma$ with $k_2 \leq N - 1$ (if $\Sigma \neq \mathbb{N}$), $[a,b] \subset \Lambda$ and any bounded Borel-measurable function $F: C(K \times [a,b]) \rightarrow \mathbb{R}$ we have $\mathbb{P}$-almost surely
		\begin{equation}\label{PBGPTower}
			\mathbb{E} \left[ F(\mathcal{L}|_{K \times [a,b]}) {\big \vert} \mathcal{F}_{\mathrm{ext}} (K \times (a,b))  \right] =\mathbb{E}_{\mathrm{avoid}, \vec{A}_K}^{k_1, k_2, a,b, \vec{x}, \vec{y}, f, g} \bigl[ F({\mathcal{Q}}) \bigr],
		\end{equation}
		where $\vec{A}_K = (A_{k_1}, \dots, A_{k_2})$, $D_{K,a,b} = K \times (a,b)$ and $D_{K,a,b}^c = (\Sigma \times \Lambda) \setminus D_{K,a,b}$,
		$$\mathcal{F}_{\mathrm{ext}} (K \times (a,b)) = \sigma \left \{ \mathcal{L}_i(s): (i,s) \in D_{K,a,b}^c \right\}$$
		is the $\sigma$-algebra generated by the variables in the brackets above, and $ \mathcal{L}|_{K \times [a,b]}$ denotes the restriction of $\mathcal{L}$ to the set $K \times [a,b]$. On the right side of (\ref{PBGPTower}),  $\mathcal{Q} = (\mathcal{Q}_{k_1}, \dots, \mathcal{Q}_{k_2})$ is the $\llbracket k_1, k_2 \rrbracket \times (a,b)$-indexed $(f,g)$-avoiding Brownian line ensemble with $\vec{A}_K$-area tilts  with entrance data $\vec{x}$, exit data $\vec{y}$ and boundary data $(f,g)$ as in Definition \ref{DefWeightedLaw}. Here $\vec{x} = (\mathcal{L}_{k_1}(a), \dots, \mathcal{L}_{k_2}(a))$, $\vec{y} = (\mathcal{L}_{k_1}(b), \dots, \mathcal{L}_{k_2}(b))$, $f = \mathcal{L}_{k_1 - 1}[a,b]$ (the restriction of $\mathcal{L}$ to the set $\{k_1 - 1 \} \times [a,b]$) with the convention that $f = \infty$ if $k_1 - 1 \not \in \Sigma$, and $g = \mathcal{L}_{k_2 +1}[a,b]$. 
		
		If we furthermore have that $N = \infty$ we say that $\mathcal{L}$ satisfies the {\em Brownian Gibbs property with respect to $\vec{A}$-area tilts} (i.e. we drop ``partial'').
	\end{definition}
	
	\begin{remark} Observe that if $N = 1$ then the conditions in Definition \ref{DefPBGP} become void, i.e., any line ensemble with one line satisfies the partial Brownian Gibbs property with respect to $\vec{A}$-area tilts.
	\end{remark}
	\begin{remark}
		It is perhaps worth explaining why equation (\ref{PBGPTower}) makes sense. Firstly, since $\Sigma \times \Lambda$ is locally compact, we know by \cite[Lemma 46.4]{Munkres} that $\mathcal{L} \rightarrow \mathcal{L}|_{K \times [a,b]}$ is a continuous map from $C(\Sigma \times \Lambda)$ to $C(K \times [a,b])$, so that the left side of (\ref{PBGPTower}) is the conditional expectation of a bounded measurable function, and is thus well-defined. From (\ref{weightedToAvoid}) and \cite[Lemma 3.4]{DM}, the right side is measurable with respect to the $\sigma$-algebra 
		$$ \sigma \left\{ \mathcal{L}_i(s) : \mbox{  $i \in K$ and $s \in \{a,b\}$, or $i \in \{k_1 - 1, k_2 +1 \}$ and $s \in [a,b]$} \right\},$$
		and hence also with respect to $\mathcal{F}_{\mathrm{ext}} (K \times (a,b))$.
	\end{remark} 
	\begin{remark}
		If $A_i = 0$ for all $i \in \Sigma$ then the partial Brownian Gibbs property with respect to $\vec{A}$-area tilts becomes the {\em partial Brownian Gibbs property} from \cite[Definition 2.7]{DM}.
	\end{remark}
	\begin{remark} Definition \ref{DefPBGP} is analogous to \cite[Definition 1]{CIW2} with a few important differences. Firstly, it is defined for line ensembles with an arbitrary number of curves, while \cite[Definition 1]{CIW2} is defined only for infinite line ensembles (although one could modify the definition accordingly). In addition, Definition \ref{DefPBGP} enjoys a nice projectional stability property. Specifically, if $1 \leq M \leq N$ and $\mathcal{L}$ is a $\llbracket 1, N\rrbracket$-indexed line ensemble on $\Lambda$ that satisfies the partial Brownian Gibbs property with respect to $\vec{A}$-area tilts, and $\tilde{\mathcal{L}}$ is obtained from $\mathcal{L}$ by projecting on $(\mathcal{L}_1, \dots, \mathcal{L}_M)$ then the induced law on $\tilde{\mathcal{L}}$ also satisfies the partial Brownian Gibbs property with respect to $\vec{A}_{\llbracket 1, M \rrbracket}$-area tilts as a $\llbracket 1, M \rrbracket$-indexed line ensemble on $\Lambda$. A more important advantage of Definition \ref{DefPBGP} over \cite[Definition 1]{CIW2} is that it applies to line ensembles that can be negative. This is crucial for our setup since the line ensembles $\widetilde{\mathcal{X}}^N$ in Theorem \ref{main} do take on negative values due to the vertical shift. In fact, as we will be shown in Remark \ref{AreaGibbsAiry}, the scaled Airy line ensemble $2^{-1/2}\mathcal{A}^{\mathrm{Airy}}$ from Section \ref{sec1.2} satisfies Definition \ref{DefPBGP} and fails to satisfy \cite[Definition 1]{CIW2} as it is not positive.
	\end{remark}
	
	We end this section with the following lemma, which explains how the partial Brownian Gibbs property with respect to $\vec{A}$-area tilts behaves under affine transformations. 
	
	\begin{lemma}\label{ScalingAGP}Fix a set $\Sigma = \llbracket 1 , N \rrbracket$ with $N \in \mathbb{N}$ or $N  = \infty$ and an interval $\Lambda \subset \mathbb{R}$. In addition, let $A_i \in [0, \infty)$ for $i \in \Sigma$ and suppose that $\mathcal{L}$ is a $\Sigma$-indexed line ensemble on $\Lambda$ that satisfies the partial Brownian Gibbs property with respect to $\vec{A}$-area tilts. Fix $\lambda > 0$, $r, u \in \mathbb{R}$, set $\Lambda_{\lambda, u} = \lambda^{-2} \Lambda + u$, and define the $\Sigma$-indexed line ensemble $\hat{\mathcal{L}}$ on $\Lambda_{\lambda, u}$ by
		\begin{equation*}
			\begin{split}
				&\hat{\mathcal{L}_i} = F_{\lambda,r,u}(\mathcal{L}_i), \mbox{ where } F_{\lambda,r,u}: C(\Lambda) \rightarrow C(\Lambda_{\lambda,u}) \mbox{ is given by } (F_{\lambda,r,u}f)(x) = \lambda^{-1} f(\lambda^{2} (x-u)) + r.
			\end{split}
		\end{equation*} 
		Then, $\hat{\mathcal{L}}$ satisfies the partial Brownian Gibbs property with respect to $\vec{B}$-area tilts, where $\vec{B} = \lambda^3 \vec{A}$.
	\end{lemma}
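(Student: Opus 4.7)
The plan is to reduce the claimed Gibbs property for $\hat{\mathcal{L}}$ to the one for $\mathcal{L}$ by a pushforward argument based on Brownian scaling. First I would observe that $F_{\lambda,r,u}$ is a homeomorphism of $C(\Lambda)$ onto $C(\Lambda_{\lambda,u})$, so the induced map on line ensembles preserves non-intersection and sends the external $\sigma$-algebra for $\mathcal{L}$ over $K \times (a,b) \subset \Sigma \times \Lambda$ to the external $\sigma$-algebra for $\hat{\mathcal{L}}$ over $K \times (a',b')$, where $a = \lambda^2(a' - u)$ and $b = \lambda^2(b' - u)$. The key input from stochastic calculus is that, by Brownian scaling applied to the explicit construction in \eqref{BBDef}, if $B$ is a Brownian bridge on $[a,b]$ with diffusion parameter $1$ from $x$ to $y$, then $F_{\lambda,r,u}(B)$ is a Brownian bridge on $[a',b']$ with diffusion parameter $1$ from $\lambda^{-1} x + r$ to $\lambda^{-1} y + r$. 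Consequently, $F_{\lambda,r,u}$ pushes $\mathbb{P}_{\mathrm{free}}^{k_1, k_2, a, b, \vec{x}, \vec{y}}$ forward to $\mathbb{P}_{\mathrm{free}}^{k_1, k_2, a', b', \hat{\vec{x}}, \hat{\vec{y}}}$, where $\hat{x}_i = \lambda^{-1} x_i + r$ and similarly for $\hat{\vec{y}}$.

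Next I would track how the conditioning events and the area tilt exponential in Definitions \ref{DefAvoidingLaw} and \ref{DefWeightedLaw} transform. Setting $\hat{f} = F_{\lambda,r,u}(f)$ and $\hat{g} = F_{\lambda,r,u}(g)$, the non-intersection and $(f,g)$-avoidance events carry over pointwise, since $\mathcal{Q}_i(v) > \mathcal{Q}_j(v)$ iff $\hat{\mathcal{Q}}_i(\lambda^{-2}v + u) > \hat{\mathcal{Q}}_j(\lambda^{-2}v + u)$, and similarly for the comparisons with $f$ and $g$. The one quantitative computation is the area tilt: the change of variables $v = \lambda^2(u' - u)$ gives
\begin{equation*}
\int_{a'}^{b'} B_i \bigl(\hat{\mathcal{Q}}_i(u') - \hat{g}(u')\bigr)\, du' = \lambda^{-3} B_i \int_a^b \bigl(\mathcal{Q}_i(v) - g(v)\bigr)\, dv = A_i \int_a^b \bigl(\mathcal{Q}_i(v) - g(v)\bigr)\, dv,
\end{equation*}
where the second equality uses the relation $B_i = \lambda^3 A_i$, which is precisely what makes the exponential factors match. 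Together with the previous paragraph, these observations show that $F_{\lambda,r,u}$ pushes $\mathbb{P}_{\mathrm{avoid}, \vec{A}_K}^{k_1, k_2, a, b, \vec{x}, \vec{y}, f, g}$ forward to $\mathbb{P}_{\mathrm{avoid}, \vec{B}_K}^{k_1, k_2, a', b', \hat{\vec{x}}, \hat{\vec{y}}, \hat{f}, \hat{g}}$.

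To close the argument I would fix $K = \llbracket k_1, k_2 \rrbracket \subset \Sigma$ with $k_2 \leq N - 1$, $[a', b'] \subset \Lambda_{\lambda, u}$, and a bounded Borel function $F: C(K \times [a', b']) \to \mathbb{R}$; setting $G(\omega) = F(F_{\lambda,r,u}(\omega))$ and applying the partial Brownian Gibbs property for $\mathcal{L}$ on $[a,b]$ with area tilts $\vec{A}$ to $G$, combined with the pushforward identity above, yields \eqref{PBGPTower} for $\hat{\mathcal{L}}$ with $\vec{B}$ in place of $\vec{A}$. I do not anticipate a substantive obstacle: the argument is essentially bookkeeping, and the one nontrivial input is the $\lambda^3$ scaling of the area tilt that drops out of the change-of-variables computation, which is exactly what makes the lemma (and in particular the exponent $3$) consistent on dimensional grounds.
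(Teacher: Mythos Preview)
Your proposal is correct and follows essentially the same route as the paper: both arguments reduce the Gibbs property for $\hat{\mathcal{L}}$ on $[a',b']$ to that for $\mathcal{L}$ on $[\lambda^2(a'-u),\lambda^2(b'-u)]$ via the matching of external $\sigma$-algebras, and then verify that $F_{\lambda,r,u}$ pushes the $\vec{A}$-tilted avoiding law forward to the $\vec{B}$-tilted one by the same change-of-variables yielding the $\lambda^3$ factor. The only cosmetic difference is that the paper cites \cite[Lemma 3.5]{DM} for the affine-transformation behavior of avoiding Brownian bridges, whereas you derive it directly from \eqref{BBDef} and Brownian scaling at the level of free bridges before adding the conditioning and tilt.
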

	\begin{proof} Since $\mathcal{L}$ is non-intersecting by assumption, we conclude the same for $\hat{\mathcal{L}}$. Fix a finite $K = \{k_1, k_1 + 1, \dots, k_2 \} \subset \Sigma$ with $k_2 \leq N - 1$, $[a,b] \subset \Lambda_{\lambda,u}$ and any bounded Borel-measurable function $G: C(K \times [a,b]) \rightarrow \mathbb{R}$. Let $c = \lambda^{2} (a-u)$, $d = \lambda^{2} (b-u)$ and observe that the following two $\sigma$-algebras are identical:
		\begin{align*} \mathcal{F}_{\mathrm{ext}}^{\lambda} &:= \sigma \big \{ \hspace{-0.3mm} \hat{\mathcal{L}}_i(s) \hspace{-0.3mm}:\hspace{-0.3mm} (i,s) \in (\Sigma \times \Lambda_{\lambda,u}) \hspace{-0.3mm}\setminus \hspace{-0.3mm} (K \times (a,b)) \big\},\\ 
			\mathcal{F}_{\mathrm{ext}} &:= \sigma \big \{ \mathcal{L}_i(s) \hspace{-0.3mm} : \hspace{-0.3mm} (i,s) \in (\Sigma \times \Lambda) \hspace{-0.3mm} \setminus \hspace{-0.3mm} (K \times (c,d)) \big\}.
		\end{align*}
		Using that $\mathcal{L}$ satisfies the partial Brownian Gibbs property with respect to $\vec{A}$-area tilts, we conclude 
		\begin{equation}\label{U1}
			\begin{split}
				&\mathbb{E} \left[ G(\hat{\mathcal{L}}|_{K \times [a,b]}) {\big \vert}\mathcal{F}_{\mathrm{ext}}^{\lambda}  \right] = \mathbb{E} \left[ G(\vec{F}_{\lambda,r,u}(\mathcal{L}|_{K \times [c,d]})) {\big \vert}\mathcal{F}_{\mathrm{ext}} \right] = \mathbb{E}_{\mathrm{avoid}, \vec{A}_K}^{k_1, k_2, c,d, \vec{x}, \vec{y}, f, g} \bigl[ G(\vec{F}_{\lambda,r,u}({\mathcal{Q}})) \bigr],
			\end{split}
		\end{equation}
		where $\vec{F}_{\lambda, r, u} : C(K \times [c,d]) \rightarrow C(K \times [a,b])$ is given by
		$$(\vec{F}_{\lambda,r,u}H)(i, x) = \lambda^{-1}  H(i, \lambda^2 (x-u)) + r \mbox{ for } i \in K \mbox{ and } x \in [a,b],$$
		and $\vec{x}$, $\vec{y}$, $f$, $g$, $\vec{A}_K$ are as in the paragraph after (\ref{PBGPTower}). From (\ref{freeToAvoid}) and (\ref{weightedToAvoid}) we have
		\begin{equation}\label{U2}
			\begin{split}
				&\mathbb{E}_{\mathrm{avoid}, \vec{A}_K}^{k_1, k_2, c,d, \vec{x}, \vec{y}, f, g} \bigl[ G(\vec{F}_{\lambda,r,u}({\mathcal{Q}})) \bigr] \hspace{-1mm} = \frac{\mathbb{E}_{\mathrm{avoid}}^{k_1, k_2, c,d, \vec{x}, \vec{y}, \infty, -\infty} \bigl[ G(\vec{F}_{\lambda,r,u}({\mathcal{Q}})) \cdot W_{\vec{A}, g}(\mathcal{Q})\cdot \chi_{f,g} (\mathcal{Q})\bigr] }{\mathbb{E}_{\mathrm{avoid}}^{k_1, k_2, c,d, \vec{x}, \vec{y}, \infty, -\infty} \bigl[ W_{\vec{A}, g}(\mathcal{Q}) \cdot \chi_{f,g} (\mathcal{Q}) \bigr] }, \mbox{ where }\\
				&W_{\vec{A}, g}(\mathcal{Q}) = \exp \left( - \sum_{i = k_1}^{k_2} \int_c^d A_i \cdot  \left( \mathcal{Q}_{i}(s) - g(s)\right)\mathrm{d}s \right) \mbox{ and }  \\
				&\chi_{f,g} (\mathcal{Q}) = {\bf 1}\{ f(x) > \mathcal{Q}_{k_1}(x) \mbox{ and } \mathcal{Q}_{k_2}(x) > g(x) \mbox{ for } x \in [c,d]\}.
			\end{split}
		\end{equation}
		Applying \cite[Lemma 3.5]{DM} to (\ref{U2}), we get
		\begin{equation}\label{U3}
			\begin{split}
				&\mathbb{E}_{\mathrm{avoid}, \vec{A}_K}^{k_1, k_2, c,d, \vec{x}, \vec{y}, f, g} \bigl[ G(\vec{F}_{\lambda,r,u}({\mathcal{Q}})) \bigr] \hspace{-1mm} = \frac{\mathbb{E}_{\mathrm{avoid}}^{k_1, k_2, a,b, \vec{u}, \vec{v}, \infty, -\infty} \bigl[ G(\mathcal{Q}) \cdot \tilde{W}_{\vec{A}, g}(\mathcal{Q})\cdot \chi_{\hat{f},\hat{g}} (\mathcal{Q})\bigr] }{\mathbb{E}_{\mathrm{avoid}}^{k_1, k_2, a,b, \vec{u}, \vec{v}, \infty, -\infty} \bigl[ \tilde{W}_{\vec{A}, g}(\mathcal{Q})\cdot \chi_{\hat{f},\hat{g}} (\mathcal{Q})\bigr] }, \mbox{ where }\\
				&\tilde{W}_{\vec{A}, g}(\mathcal{Q}) = \exp \left( - \sum_{i = k_1}^{k_2} \int_c^d A_i \cdot  \left( \lambda \mathcal{Q}_{i}(\lambda^{-2} s + u) - g(s) - r \lambda \right) \mathrm{d}s \right) \mbox{, }  \\
				& u_i = \lambda (x_i - r) \mbox{, } v_i = \lambda (y_i - r) \mbox{ for $i = k_1, \dots, k_2$,}   \\
				&\hat{f}(x) = \lambda^{-1} f(\lambda^2(x-u)) \mbox{ and } \hat{g}(x) = \lambda^{-1} g(\lambda^2(x-u)) \mbox{ for } x\in [a,b].
			\end{split}
		\end{equation}
		Applying the change of variables $s = \lambda^2(t-u)$ we obtain
		$$\tilde{W}_{\vec{A}, g}(\mathcal{Q})  = D \cdot \exp \left( - \sum_{i = k_1}^{k_2} \int_a^b \lambda^3 A_i \cdot  \left(  \mathcal{Q}_{i}(t) - \hat{g}(t) \right)  \mathrm{d}t \right),$$
		where $D$ is a deterministic factor that depends on $k_1, k_2, r, \lambda, a, b, \vec{A}$. We apply (\ref{freeToAvoid}) and (\ref{weightedToAvoid}) to the right side of (\ref{U3}), factoring out $D$ in the numerator and denominator, to get
		\begin{equation}\label{U4}
			\begin{split}
				&\mathbb{E}_{\mathrm{avoid}, \vec{A}_K}^{k_1, k_2, c,d, \vec{x}, \vec{y}, f, g} \bigl[ G(\vec{F}_{\lambda,r,u}({\mathcal{Q}})) \bigr]  = \mathbb{E}_{\mathrm{avoid}, \vec{B}_K}^{k_1, k_2, a,b, \vec{u}, \vec{v}, \hat{f}, \hat{g}} \bigl[ G(\mathcal{Q}) \bigr],
			\end{split}
		\end{equation}
		where $\vec{B} = \lambda^3\vec{A}$. Combining (\ref{U1}) with (\ref{U4}), we conclude the statement of the lemma. 
	\end{proof}

	%
	%
	\subsection{Area tilts and parabolic shifts}\label{sec2.2} The following statement shows that the ensembles from Definition \ref{DefWeightedLaw} satisfy Definition \ref{DefPBGP}, which in particular produces a large class of ensembles that satisfy the partial Brownian Gibbs property with respect to $\vec{A}$-area tilts.

	\begin{lemma}\label{lemgibbs} Assume the same notation as in Definition \ref{DefWeightedLaw} and suppose that $f = \infty$, $k_1 = 1$, $k_2 = k = N$. Suppose that $(\mathcal{L}_{1}, \dots, \mathcal{L}_{N})$ is distributed according to $\mathbb{P}^{1, N, a,b, \vec{x},\vec{y}, \infty, g}_{\mathrm{avoid}, \vec{A}}$ as in that definition. Then, the $\llbracket 1, N + 1 \rrbracket$-indexed line ensemble $\mathcal{L}$ on $[a,b]$ with curves $\mathcal{L}_1, \dots, \mathcal{L}_N$ and $\mathcal{L}_{N+1} = g$ satisfies the partial Brownian Gibbs property with respect to $\vec{A}$-area tilts from Definition \ref{DefPBGP}.
	\end{lemma}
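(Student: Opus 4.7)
My plan is to prove the lemma by direct disintegration, using the spatial Markov property of independent Brownian bridges together with the Radon--Nikodym formulas (\ref{freeToAvoid}) and (\ref{weightedToAvoid}). I would begin by writing $\mu := \mathbb{P}^{1,N,a,b,\vec{x},\vec{y}}_{\mathrm{free}}$, so that the joint law of $(\mathcal{L}_1,\dots,\mathcal{L}_N)$ is $\mu$ reweighted by
\[
W(\mathcal{L}) \;=\; \mathbf{1}\{\mathcal{L}_1(r)>\cdots>\mathcal{L}_N(r)>g(r)\text{ for all }r\in[a,b]\}\cdot\exp\Bigl(-\sum_{i=1}^N A_i\int_a^b(\mathcal{L}_i(u)-g(u))\,du\Bigr),
\]
normalized by its $\mu$-expectation. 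Non-intersection of the full ensemble $\mathcal{L}$ is then immediate since $\mathcal{L}_{N+1}\equiv g<\mathcal{L}_N$ on $[a,b]$. To verify the partial Brownian Gibbs property with $\vec{A}$-area tilts, I would fix $K=\llbracket k_1,k_2\rrbracket$ with $k_2\leq N$, a subinterval $[c,d]\subset[a,b]$, and a bounded Borel $F$, and introduce $\vec{u}=(\mathcal{L}_{k_1}(c),\dots,\mathcal{L}_{k_2}(c))$, $\vec{v}=(\mathcal{L}_{k_1}(d),\dots,\mathcal{L}_{k_2}(d))$, $\hat{f}=\mathcal{L}_{k_1-1}|_{[c,d]}$ (or $\infty$ if $k_1=1$), and $\hat{g}=\mathcal{L}_{k_2+1}|_{[c,d]}$, noting that $\hat{g}=g|_{[c,d]}$ precisely when $k_2=N$.

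The key step is the factorization $W=W_{\mathrm{ext}}\cdot W_{\mathrm{int}}$, where $W_{\mathrm{ext}}$ is $\mathcal{F}_{\mathrm{ext}}(K\times(c,d))$-measurable and
\[
W_{\mathrm{int}}=\mathbf{1}\{\hat f(r)>\mathcal{L}_{k_1}(r)>\cdots>\mathcal{L}_{k_2}(r)>\hat g(r),\ r\in[c,d]\}\cdot\exp\Bigl(-\sum_{i\in K}A_i\int_c^d(\mathcal{L}_i(u)-\hat g(u))\,du\Bigr).
\]
For the indicator I would argue that every ordering constraint over $[a,b]\setminus[c,d]$ and every ordering constraint on $[c,d]$ between two indices both outside $K$ is external-measurable, while the remaining constraints on $[c,d]$ collapse exactly to $\hat f>\mathcal{L}_{k_1}>\cdots>\mathcal{L}_{k_2}>\hat g$. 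For the exponential, the contributions with $i\notin K$ and those with $i\in K$ over $[a,b]\setminus[c,d]$ are external, and in the surviving $\int_c^d(\mathcal{L}_i-g)\,du$ I would write $g=\hat g-(\hat g-g)$, absorbing the external additive shift $\sum_{i\in K}A_i\int_c^d(\hat g-g)(u)\,du$ (which vanishes identically when $k_2=N$) into $W_{\mathrm{ext}}$.

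Finally, since $\mu$ is the law of $N$ independent Brownian bridges on $[a,b]$, the spatial Markov property gives that the conditional law of $(\mathcal{L}_i|_{[c,d]})_{i\in K}$ under $\mu$ given $\mathcal{F}_{\mathrm{ext}}(K\times(c,d))$ is $\mathbb{P}^{k_1,k_2,c,d,\vec{u},\vec{v}}_{\mathrm{free}}$. A Bayes' rule computation for the reweighted measure $W\cdot\mu$ then cancels $W_{\mathrm{ext}}$ between numerator and denominator, yielding
\[
\mathbb{E}[F(\mathcal{L}|_{K\times[c,d]})\mid\mathcal{F}_{\mathrm{ext}}(K\times(c,d))]=\frac{\mathbb{E}^{k_1,k_2,c,d,\vec{u},\vec{v}}_{\mathrm{free}}[F(\mathcal{Q})W_{\mathrm{int}}(\mathcal{Q})]}{\mathbb{E}^{k_1,k_2,c,d,\vec{u},\vec{v}}_{\mathrm{free}}[W_{\mathrm{int}}(\mathcal{Q})]}=\mathbb{E}^{k_1,k_2,c,d,\vec{u},\vec{v},\hat f,\hat g}_{\mathrm{avoid},\vec{A}_K}[F(\mathcal{Q})],
\]
where the last equality reads off from (\ref{freeToAvoid}) and (\ref{weightedToAvoid}). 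This is exactly (\ref{PBGPTower}). I expect the only point requiring real care to be the $g\mapsto\hat g$ substitution in the area-tilt exponential: this is precisely where the augmentation $\mathcal{L}_{N+1}=g$ pulls its weight, making the substitution automatic when $k_2=N$ and leaving only an external-measurable, cleanly absorbable remainder when $k_2<N$.
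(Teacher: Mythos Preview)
Your proposal is correct and follows essentially the same route as the paper: factorize the total density into an $\mathcal{F}_{\mathrm{ext}}$-measurable piece and an internal piece (including the $g\mapsto\hat g$ shift in the area tilt, which the paper handles via the factor $G(\mathcal{B}_{k_2+1})$), then use the bridge Markov property and cancel. The only cosmetic difference is that the paper first invokes the partial Brownian Gibbs property of the \emph{un-tilted} avoiding ensemble (citing \cite[Lemma 2.13]{DM}) and then reweights by the area tilt, whereas you work directly from the free bridge measure $\mu$ and treat the avoidance indicator and the area tilt in a single factorization; this is a minor streamlining rather than a different argument.
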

	\begin{proof} One starts by observing that if $\mathcal{B}_{1}, \dots, \mathcal{B}_{N}$ are distributed according to $\mathbb{P}^{1, N, a,b, \vec{x},\vec{y}, \infty, g}_{\mathrm{avoid}}$ as in Definition \ref{DefAvoidingLaw}, then the $\llbracket 1, N + 1 \rrbracket$-indexed line ensemble $\mathcal{B}$ with the curves $\mathcal{B}_1, \dots, \mathcal{B}_N$ and $\mathcal{B}_{N+1} = g$ satisfies the partial Brownian Gibbs property from \cite[Definition 2.7]{DM}. This can be deduced by a straightforward adaptation of the proof of \cite[Lemma 2.13]{DM} (which corresponds to the case $g = -\infty$). In the sequel we write $\mathbb{P}_{\vec{A}}$ in place of $\mathbb{P}^{1, N, a,b, \vec{x},\vec{y}, \infty, g}_{\mathrm{avoid}, \vec{A}}$ and $\mathbb{P}$ in place of $\mathbb{P}^{1, N, a,b, \vec{x},\vec{y}, \infty, g}_{\mathrm{avoid}}$ to ease notation. We also write $\mathbb{E}_{\vec{A}}$ and $\mathbb{E}$ for the expectations with respect to these measures.\\
		
		Fix $K = \{k_1, k_1 + 1, \dots, k_2 \}$ with $1 \leq k_1 \leq k_2 \leq N$ and $[c,d] \subset [a,b]$. By the defining properties of conditional expectation it suffices to prove that for any $B \in \mathcal{F}_{\mathrm{ext}} (K \times (c,d))$ and bounded Borel-measurable function $F: C(K \times [c,d]) \rightarrow \mathbb{R}$ we have
		\begin{equation}\label{S2P1}
			\mathbb{E}_{\vec{A}} \left[ {\bf 1}_B(\mathcal{L}) \cdot F(\mathcal{L}|_{K \times [c,d]}) \right] = \mathbb{E}_{\vec{A}} \left[ {\bf 1}_B(\mathcal{L})  \cdot \mathbb{E}_{\mathrm{avoid}, \vec{A}_K}^{k_1, k_2, c,d, \vec{x}\hspace{1mm} ', \vec{y} \hspace{1mm} ', f', g'} \bigl[ F({\mathcal{Q}}) \bigr] \right],
		\end{equation}
		where $\vec{x}\hspace{1mm}' = (\mathcal{L}_{k_1}(c), \dots, \mathcal{L}_{k_2}(c))$, $\vec{y}\hspace{1mm} ' = (\mathcal{L}_{k_1}(d), \dots, \mathcal{L}_{k_2}(d))$, $f' = \mathcal{L}_{k_1 - 1}[c,d]$ (the restriction of $\mathcal{L}$ to the set $\{k_1 - 1 \} \times [c,d]$) with the convention that $f = \infty$ if $k_1  = 1$, and $g = \mathcal{L}_{k_2 +1}[c,d]$. By a standard monotone class argument, see e.g. Step 1 of the proof of \cite[Lemma 2.13]{DM}, it suffices to prove (\ref{S2P1}) only for events $B$ of the form
		$$B = \{ h \in C(\llbracket 1, N\rrbracket \times [a,b]): h(n_i, t_i) \leq a_i,\mbox{ }(n_i,t_i) \in D_{K,c,d}^c  \mbox{ for } i\in \llbracket 1, m\rrbracket \}.$$    
		
		From (\ref{weightedToAvoid}) we know that 
		\begin{equation}\label{S2P2}
			\begin{split}
				&\mathbb{E}_{\vec{A}} \left[ {\bf 1}_{B}(\mathcal{L}) \cdot F(\mathcal{L}|_{K \times [c,d]}) \right] = \frac{\mathbb{E} \left[ {\bf 1}_{B}(\mathcal{B}) \cdot  W_{\vec{A}} (\mathcal{B}) \cdot F(\mathcal{B}|_{K \times [c,d]})  \right]}{\mathbb{E}\left[W_{\vec{A}} (\mathcal{B})\right]} \mbox{, and }\\
				&\mathbb{E}_{\vec{A}} \left[  {\bf 1}_{B}(\mathcal{L})   \cdot \mathbb{E}_{\mathrm{avoid}, \vec{A}_K}^{k_1, k_2, c,d, \vec{x}\hspace{1mm} ', \vec{y} \hspace{1mm} ', f', g'} \bigl[ F({\mathcal{Q}}) \bigr] \right] = \frac{\mathbb{E} \left[  {\bf 1}_{B}(\mathcal{B}) \cdot  W_{\vec{A}} (\mathcal{B}) \cdot \mathbb{E}_{\mathrm{avoid}, \vec{A}_K}^{k_1, k_2, c,d, \vec{u}, \vec{v}, h_+, h_- } \bigl[ F({\mathcal{Q}}) \bigr]  \right]}{\mathbb{E}\left[W_{\vec{A}} (\mathcal{B})\right]},
			\end{split}
		\end{equation}
		where $\vec{u} = (\mathcal{B}_{k_1}(c), \dots, \mathcal{B}_{k_2}(c))$, $\vec{v} = (\mathcal{B}_{k_1}(d), \dots, \mathcal{B}_{k_2}(d))$, $h_+ =   \mathcal{B}_{k_1-1}[c,d] $ (or $h_+ = \infty$ if $k_1 =1 $), $h_- = \mathcal{B}_{k_2+1}[c,d]$, and
		\begin{equation}\label{S2P3}
			\begin{split}
				&W_{\vec{A}} (\mathcal{B}) = \exp \left( - \sum_{i = 1}^{N} \int_a^b A_i \cdot \left( \mathcal{B}_{i}(u) - g(u)\right) \mathrm{d}u \right) = W^1_{\vec{A}} (\mathcal{B}) \cdot W^2_{\vec{A}} (\mathcal{B}), \mbox{ where }\\
				& W^1_{\vec{A}} (\mathcal{B}) = \exp \left( - \sum_{i \in K} \int_c^d A_i \cdot \left( \mathcal{B}_{i}(u) - g(u)\right) \mathrm{d}u \right) \mbox{ and }\\
				& W^2_{\vec{A}} (\mathcal{B}) = \exp \left( - \sum_{i \in \llbracket 1, N \rrbracket \setminus K} \int_a^b A_i \cdot \left( \mathcal{B}_{i}(u) - g(u)\right) \mathrm{d}u - \sum_{i \in K} \int_{[a,c] \cup [d,b]} \hspace{-5mm} A_i \cdot \left( \mathcal{B}_{i}(u) - g(u)\right) \mathrm{d}u \right).
			\end{split}
		\end{equation}
		
		Notice that $W^2_{\vec{A}} (\mathcal{B}) $ is $\mathcal{F}_{\mathrm{ext}} (K \times (c,d))$-measurable and so by the partial Brownian Gibbs property (satisfied by $\mathcal{B}$) we have
		\begin{equation}\label{S2P4}
			\begin{split}
				&\mathbb{E} \left[ {\bf 1}_{B}(\mathcal{B}) \cdot  W_{\vec{A}} (\mathcal{B}) \cdot F(\mathcal{B}|_{K \times [c,d]})  \right] = \\
				&\mathbb{E} \left[ {\bf 1}_{B}(\mathcal{B}) \cdot  W^2_{\vec{A}} (\mathcal{B}) \cdot \mathbb{E}_{\mathrm{avoid}}^{k_1, k_2, c,d, \vec{u}, \vec{v}, h_+, h_-} \bigl[ W^1_{\vec{A}} (\mathcal{Q})\cdot F({\mathcal{Q}}) \bigr] \right].
			\end{split}
		\end{equation}
		We next notice that 
		\begin{equation*}
			\begin{split}
				&W^1_{\vec{A}} (\mathcal{Q})  = \tilde{W}^1_{\vec{A}} (\mathcal{Q}) \cdot G(\mathcal{B}_{k_2+1}), \mbox{ where } G(\mathcal{B}_{k_2+1}) = \exp \left( - \sum_{i \in K} \int_c^d A_i \cdot \left( \mathcal{B}_{k_2+1}(u) - g(u)\right)\mathrm{d}u \right) , \mbox{ and }\\
				&\tilde{W}^1_{\vec{A}} (\mathcal{Q}) = \exp \left( - \sum_{i \in K} \int_c^d A_i \cdot \left( \mathcal{Q}_{i}(u) - \mathcal{B}_{k_2+1}(u)\right)\mathrm{d}u \right).
			\end{split}
		\end{equation*}
		Combining the latter with (\ref{weightedToAvoid}) we conclude that 
		\begin{equation*}
			\begin{split}
				&\mathbb{E}_{\mathrm{avoid}}^{k_1, k_2, c,d, \vec{u}, \vec{v}, h_+, h_-} \bigl[ W^1_{\vec{A}} (\mathcal{Q})\cdot F({\mathcal{Q}}) \bigr]  =\mathbb{E}_{\mathrm{avoid}, \vec{A}_K}^{k_1, k_2, c,d, \vec{u}, \vec{v}, h_+, h_-} \bigl[  F({\mathcal{Q}}) \bigr]  \cdot   G(\mathcal{B}_{k_2+1}) \cdot \mathbb{E}_{\mathrm{avoid}}^{k_1, k_2, c,d, \vec{u}, \vec{v}, h_+, h_-} \bigl[  \tilde{W}^1_{\vec{A}} (\mathcal{Q}) \bigr].
			\end{split}
		\end{equation*}
		The latter and (\ref{S2P4}) give
		\begin{equation}\label{S2P5}
			\begin{split}
				&\mathbb{E} \left[ {\bf 1}_{B}(\mathcal{B}) \cdot  W_{\vec{A}} (\mathcal{B}) \cdot F(\mathcal{B}|_{K \times [c,d]})  \right] = \\
				&\mathbb{E} \left[ {\bf 1}_{B}(\mathcal{B}) \cdot  W^2_{\vec{A}} (\mathcal{B}) \cdot  G(\mathcal{B}_{k_2+1}) \cdot \mathbb{E}_{\mathrm{avoid}}^{k_1, k_2, c,d, \vec{u}, \vec{v}, h_+, h_-} \bigl[  \tilde{W}^1_{\vec{A}} (\mathcal{Q}) \bigr] \cdot \mathbb{E}_{\mathrm{avoid}, \vec{A}_K}^{k_1, k_2, c,d, \vec{u}, \vec{v}, h_+, h_-} \bigl[  F({\mathcal{Q}}) \bigr]\right].
			\end{split}
		\end{equation}
		
		Arguing in an analogous way for the second line in (\ref{S2P2}) (i.e. replacing $F(\mathcal{B}|_{K \times [c,d]})$ with $\mathbb{E}_{\mathrm{avoid}, \vec{A}_K}^{k_1, k_2, c,d, \vec{u}, \vec{v}, h_+, h_- } \bigl[ F({\mathcal{Q}}) \bigr] $) we get that the numerator on the right side of the second line in (\ref{S2P2}) is equal to the second line in (\ref{S2P5}). This proves that the two lines in (\ref{S2P2}) are equal, which proves (\ref{S2P1}) and hence the lemma.
	\end{proof}
	
	In the remainder of this section we only deal with the case when $\vec{A}$ is such that $A_i = A$ for all $i \in \mathbb{N}$ and $A \in [0, \infty)$. We seek to show that if an ensemble satisfies the partial Brownian Gibbs property with respect to $\vec{A}$-area tilts, then upon subtracting the same parabola with curvature $A$ from each curve the resulting ensemble satisfies the partial Brownian Gibbs property from \cite[Definition 2.7]{DM} (i.e. the property in Definition \ref{DefPBGP} with all $A_i = 0$). This statement is a consequence of the Cameron-Martin-Girsanov theorem (see \cite{CamMar44} or \cite[Chapter 3, Theorem 5.1]{KS} for a textbook treatment), but we write out the full argument starting with the following core lemma.
	
	\begin{lemma}\label{finParLemma} Let $k_1, k_2, k, a, b, \vec{x}, \vec{y},f,g, \Sigma$ be as in Definition \ref{DefWeightedLaw} and fix $A \in [0, \infty)$, $c,d \in \mathbb{R}$. Let $\mathcal{L}$ be a $\Sigma$-indexed line ensemble on $[a,b]$ with distribution $\mathbb{P}_{\mathrm{avoid}}^{k_1, k_2, a,b, \vec{x}, \vec{y}, f, g}$ as in Definition \ref{DefAvoidingLaw}. Let $h(s) = (A/2)s^2 + cs + d$ and define the $\Sigma$-indexed line ensemble $\tilde{\mathcal{L}}$ on $[a,b]$ via 
		$$\tilde{\mathcal{L}}_i(s) = \mathcal{L}_i(s) + h(s) \mbox{ for $s \in [a,b]$ and $i \in \Sigma$}.$$
		Define $\vec{u}, \vec{v}$ via $u_i = x_i + h(a)$, $v_i = y_i + h(b)$ for $i = 1, \dots, k$, and set $\tilde{f}(s) = f(s) + h(s)$, $\tilde{g}(s) = g(s) + h(s)$ $($with the convention that $\tilde{f} = \infty$ if $f = \infty)$. Then, the law of $\tilde{\mathcal{L}}$ is $\mathbb{P}_{\mathrm{avoid}, \vec{A}}^{k_1, k_2, a,b, \vec{u}, \vec{v}, \tilde{f}, \tilde{g}}$ as in Definition \ref{DefWeightedLaw} with $A_i = A$ for $i \in \Sigma$.
	\end{lemma}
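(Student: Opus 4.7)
The plan is to reduce to a single-curve Cameron--Martin--Girsanov calculation for a Brownian bridge shifted by the deterministic smooth function $h$, combine the $k$ independent bridges, and handle the avoiding constraint by a direct transport-of-measure argument.

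First I would establish the key single-curve identity: if $B$ is a Brownian bridge from $x_i$ to $y_i$ on $[a,b]$ with diffusion parameter $1$ and $\tilde B = B + h$, then the law of $\tilde B$ is absolutely continuous with respect to the law of a Brownian bridge from $u_i = x_i + h(a)$ to $v_i = y_i + h(b)$, and
\begin{equation*}
\frac{d\,\mathrm{Law}(\tilde B)}{d\,\mathrm{Law}(\mathrm{BB}_{u_i,v_i})}(\omega) = C_i\, \exp\left(-A \int_a^b \omega(s)\, ds\right)
\end{equation*}
for some deterministic constant $C_i = C_i(a,b,x_i,y_i,A,c,d)\in(0,\infty)$. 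The cleanest route is by finite-dimensional approximation. Partition $[a,b]$ into $n$ equal subintervals with mesh $\delta = (b-a)/n$, compute the ratio of the finite-dimensional Gaussian densities of $\tilde B$ and $\mathrm{BB}_{u_i,v_i}$, expand the quadratic exponents, and pass to the limit $n\to\infty$. Up to the deterministic factor $p_{b-a}(u_i,v_i)/p_{b-a}(x_i,y_i)$, this yields the Girsanov-type density $\exp\bigl(\int_a^b h'(s)\, d\tilde B(s) - \tfrac{1}{2}\int_a^b h'(s)^2\, ds\bigr)$. Since $h$ is $C^2$, standard integration by parts for a Wiener-type integral with deterministic smooth integrand gives $\int_a^b h'(s)\, d\tilde B(s) = h'(b)v_i - h'(a)u_i - \int_a^b h''(s)\tilde B(s)\, ds$. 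Because $h''\equiv A$ is constant, the stochastic part collapses to $-A\int_a^b \tilde B(s)\,ds$, while the endpoint terms and $\tfrac12\int (h')^2\,ds$ are deterministic and absorb into $C_i$.

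For the $k$-curve case, independence of the Brownian bridges under $\mathbb{P}_{\mathrm{free}}^{k_1,k_2,a,b,\vec x,\vec y}$ implies that the joint Radon--Nikodym derivative is the product of the single-curve ones, so pushing the free law forward under the coordinatewise shift $B_i\mapsto B_i + h$ yields $\mathbb{P}_{\mathrm{free}}^{k_1,k_2,a,b,\vec u,\vec v}$ reweighted by $\exp\bigl(-A\sum_{i=k_1}^{k_2}\int_a^b \tilde{\mathcal{L}}_i(s)\,ds\bigr)$ up to an overall deterministic constant. The avoidance event $\{f(s)>B_{k_1}(s)>\cdots>B_{k_2}(s)>g(s)\text{ for all }s\in[a,b]\}$ is mapped bijectively by the shift to the avoidance event with barriers $(\tilde f,\tilde g)$ for $\tilde{\mathcal{L}}$. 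Using (\ref{freeToAvoid}) to pass between free and avoiding laws on both sides, the law of $\tilde{\mathcal{L}}$ becomes $\mathbb{P}_{\mathrm{avoid}}^{k_1,k_2,a,b,\vec u,\vec v,\tilde f,\tilde g}$ reweighted by $\exp\bigl(-A\sum_{i=k_1}^{k_2}\int_a^b \tilde{\mathcal{L}}_i\,ds\bigr)$, modulo a normalizing constant. Finally, decomposing $\int \tilde{\mathcal{L}}_i\,ds = \int(\tilde{\mathcal{L}}_i - \tilde g)\,ds + \int \tilde g\,ds$ shows that this reweighting differs from the one in (\ref{weightedToAvoid}) by the deterministic multiplicative factor $\exp\bigl(-kA\int_a^b \tilde g(s)\,ds\bigr)$, which is absorbed into the normalization. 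Comparing with Definition \ref{DefWeightedLaw} identifies the law of $\tilde{\mathcal{L}}$ as $\mathbb{P}_{\mathrm{avoid},\vec A}^{k_1,k_2,a,b,\vec u,\vec v,\tilde f,\tilde g}$ with $A_i = A$ for all $i$, completing the argument.

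The main technical step is the single-curve finite-dimensional limit, i.e., justifying that the Riemann-sum approximations to $\int h'\,d\tilde B$ and $\int (h')^2\,ds$ converge appropriately so that the finite-dimensional Radon--Nikodym ratios converge to the claimed expression as an almost sure identity. The remainder is bookkeeping: tracking deterministic multiplicative constants and verifying that they can be freely absorbed into normalizations.
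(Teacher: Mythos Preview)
Your proposal is correct and follows essentially the same approach as the paper: a single-curve Cameron--Martin--Girsanov computation yielding the density $\exp(-A\int \omega)$ via integration by parts (using $h''\equiv A$), then independence to pass to $k$ curves, then transport of the avoidance event and absorption of the $\tilde g$-term into the normalization. The only minor technical difference is that the paper derives the single-curve identity by applying the standard Girsanov theorem to Brownian motion on $[0,1]$ and then pushing forward to the bridge via the map $F_{x,y}$, rather than computing finite-dimensional bridge densities directly; both routes yield the same identity (your equation matches the paper's (\ref{GirsanovE4})).
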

	\begin{proof} Let $W_t$ be a standard Brownian motion on $[0,1]$, defined on a probability space $(\Omega, \mathcal{F}, \mathbb{P})$. From the Cameron-Martin-Girsanov theorem we have that if $\phi \in C^2([0,1])$, and $\phi(0) = 0$, then 
		$$\tilde{W}_t := W_t - \phi(t)$$
		is a standard Brownian motion on the space $(\Omega, \mathcal{F}, \mathbb{Q})$, where 
		$$\frac{\mathrm{d} \mathbb{Q}}{\mathrm{d} \mathbb{P}} = \exp \left( \int_0^1 \phi'(s) \,\mathrm{d}W_s - \int_0^1 [\phi'(s)]^2 \,\mathrm{d}s \right).$$
		Since $\phi \in C^2([0,1])$ is deterministic, we see using integration by parts that 
		\begin{equation}\label{GirsanovE1}
			\begin{split}
				&\frac{\mathrm{d} \mathbb{Q}}{\mathrm{d} \mathbb{P}} = \frac{1}{Z_\phi}\exp \left( \phi'(1)  W_1 - \int_0^1 \phi''(s)W_s \,\mathrm{d}s  \right), \mbox{ where }\\
				& Z_{\phi} = \mathbb{E}_{\mathbb{P}} \left[ \exp \left( \phi'(1)  W_1 - \int_0^1 \phi''(s)W_s \,\mathrm{d}s  \right)\right].
			\end{split}
		\end{equation}
		
		For $a,b$ as in the statement of the lemma, and $x, y \in \mathbb{R}$ we define $F_{x,y}: C([0,1]) \rightarrow C([a,b])$ via 
		$$F_{x,y}(\psi)(t) = (b-a)^{1/2} \left[\psi \left( \frac{t - a}{b-a} \right) - \frac{t- a}{b-a} \cdot \psi(1) \right] + \frac{b-t}{b-a} \cdot x +  \frac{t- a}{b-a} \cdot y.$$
		We also fix $\phi(s) = (b-a)^{3/2} (A/2) \cdot s^2$ and observe that 
		\begin{equation}\label{GirsanovE2}
			F_{x+h(a), y + h(b)} (W)(t) = F_{x,y}(\tilde{W})(t)  + h(t) \mbox{ for $t \in [a,b]$}.
		\end{equation}
		One way to quickly check the latter is to note that the difference of the two sides is a linear (in $t$) function, and then check that the two sides agree when $t = a$ and $t = b$. 
		
		Using (\ref{GirsanovE1}), we have for any bounded measurable $G: C([a,b]) \rightarrow \mathbb{R}$ that
		\begin{equation}\label{GirsanovE3}
			\mathbb{E}_{\mathbb{Q}} \left[ G(F_{x,y}(\tilde{W}) + h) \right] = \frac{\mathbb{E}_{\mathbb{P}} \left[ G(F_{x,y}(\tilde{W}) + h) \cdot \exp \left( \phi'(1)  W_1 - \int_0^1 \phi''(s)W_s\, \mathrm{d}s \right) \right]}{\mathbb{E}_{\mathbb{P}} \left[ \exp \left( \phi'(1)  W_1 - \int_0^1 \phi''(s)W_s \,\mathrm{d}s \right)\right]}.
		\end{equation}
		
		In view of (\ref{BBDef}), we have that under $\mathbb{Q}$ the curve $F_{x,y}(\tilde{W})$ is a Brownian bridge (from $B(a) = x$ to $B(b) = y$) with diffusion parameter $1$. Furthermore,  from (\ref{GirsanovE2}) we have that under $\mathbb{P}$ the curve $F_{x,y}(\tilde{W}) + h$ is a Brownian bridge (from $B(a) = x + h(a)$ to $B(b) = y + h(b)$) with diffusion parameter $1$. Finally, a direct computation using that $\phi(s) = (b-a)^{3/2}(A/2) \cdot s^2$ gives 
		$$\phi'(1)  W_1 - \int_0^1 \phi''(s)W_s \, \mathrm{d}s = - A \int_a^b F_{x+h(a), y + h(b)} (W)(t) \, \mathrm{d}t + \kappa,$$
		where $\kappa$ is a deterministic constant (it depends on $a,b,x,y,h$). Combining all of these observations and (\ref{GirsanovE3}) we conclude that
		\begin{equation}\label{GirsanovE4}
			\mathbb{E}_{\mathrm{free}}^{a,b,x,y} \left[ G(\mathcal{Q}_1 + h) \right] = \frac{\mathbb{E}^{a,b,x + h(a),y + h(b)}_{\mathrm{free}} \left[ G(\tilde{\mathcal{Q}}_1)  \exp \left( - A \int_a^b \tilde{\mathcal{Q}}_1(t)\, \mathrm{d}t  \right) \right]}{\mathbb{E}^{a,b,x + h(a),y + h(b)}_{\mathrm{free}} \left[ \exp \left( - A \int_a^b \tilde{\mathcal{Q}}_1(t)\, \mathrm{d}t  \right) \right]},
		\end{equation}
		where $\mathcal{Q}_1$ has law $\mathbb{P}_{\mathrm{free}}^{a,b,x,y}$ and $\tilde{\mathcal{Q}}_1$ has law $\mathbb{P}^{a,b,x + h(a),y + h(b)}_{\mathrm{free}}$.
		
		Using (\ref{GirsanovE4}), independence and a monotone class argument, we see that for any bounded measurable $G: C(\Sigma \times [a,b]) \rightarrow \mathbb{R}$,
		\begin{equation}\label{GirsanovE5}
			\mathbb{E}_{\mathrm{free}}^{k_1, k_2, a,b,\vec{x},\vec{y}} \left[ G(\mathcal{Q} + h) \right] = \frac{\mathbb{E}^{k_1, k_2, a,b,\vec{u}, \vec{v}}_{\mathrm{free}} \left[ G(\tilde{\mathcal{Q}})  \exp \left( - A \sum_{i = k_1}^{k_2}  \int_a^b \tilde{\mathcal{Q}}_i(t)\, \mathrm{d}t  \right) \right]}{\mathbb{E}^{k_1, k_2, a,b,\vec{u},\vec{v}}_{\mathrm{free}} \left[ \exp \left( - A \sum_{i = k_1}^{k_2} \int_a^b \tilde{\mathcal{Q}}_i(t)\, \mathrm{d}t  \right) \right]},
		\end{equation}
		where $\mathcal{Q}$ is $\mathbb{P}_{\mathrm{free}}^{k_1, k_2, a,b,\vec{x},\vec{y}}$-distributed, $\tilde{\mathcal{Q}}$ is $\mathbb{P}^{k_1, k_2, a,b,\vec{u},\vec{v}}_{\mathrm{free}}$-distributed and $\mathcal{Q} + h$ stands for the ensemble obtained by $\mathcal{Q}$ by adding to each curve $h$. 
		
		We now take the ratio of (\ref{GirsanovE5}) when $G = H \cdot \chi$ and $G = \chi$, where $H: C(\Sigma \times [a,b]) \rightarrow \mathbb{R}$ is bounded and measurable and for $\Psi \in C(\Sigma \times [a,b])$ we have
		$$\chi(\Psi) = {\bf1} \left\{ f(r) > \Psi(k_1, r) >  \Psi(k_1 + 1, r) > \cdots >  \Psi(k_2, r) > g(r) \mbox{ for all $r \in[a,b]$} \right\}.$$ 
		In view of (\ref{freeToAvoid}) the result is 
		\begin{equation*}
			\mathbb{E}_{\mathrm{avoid}}^{k_1, k_2, a,b,\vec{x},\vec{y},f,g} \left[ H(\mathcal{Q} + h) \right] = \frac{\mathbb{E}^{k_1, k_2, a,b,\vec{u}, \vec{v},f,g}_{\mathrm{avoid}} \left[ H(\tilde{\mathcal{Q}})  \exp \left( - A \sum_{i = k_1}^{k_2}  \int_a^b \tilde{\mathcal{Q}}_i(t)\, \mathrm{d}t  \right) \right]}{\mathbb{E}^{k_1, k_2, a,b,\vec{u}, \vec{v},f,g}_{\mathrm{avoid}} \left[ \exp \left( - A \sum_{i = k_1}^{k_2}  \int_a^b \tilde{\mathcal{Q}}_i(t)\, \mathrm{d}t  \right) \right]}.
		\end{equation*}
		The latter equation, (\ref{weightedToAvoid}), and the fact that $g$ is deterministic give the statement of the lemma.
	\end{proof}
	
	We now turn to the main result of this section.
	
	\begin{lemma}\label{GibbsPar} Fix a set $\Sigma = \llbracket 1 , N \rrbracket$ with $N \in \mathbb{N}$ or $N  = \infty$ and an interval $\Lambda \subset \mathbb{R}$. In addition, let $A \in [0, \infty)$, $c, d\in \mathbb{R}$ and put $h(x) = (A/2)x^2 + cx + d$ for $x \in \mathbb{R}$. Suppose that $\mathcal{L}$ is a $\Sigma$-indexed line ensemble on $\Lambda$ and define the $\Sigma$-indexed line ensemble $\tilde{\mathcal{L}}$ on $\Lambda$ by  
		$$\tilde{\mathcal{L}}_i(s) = \mathcal{L}_i(s) + h(s) \mbox{ for $s \in \Lambda$ and $i \in \Sigma$}.$$
		Then, $\mathcal{L}$ satisfies the partial Brownian Gibbs property if and only if $\tilde{\mathcal{L}}$ satisfies the partial Brownian Gibbs property with respect to $\vec{A}$-area tilts, where $A_i = A$ for $i \in \Sigma$.
	\end{lemma}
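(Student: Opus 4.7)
The proof plan is to reduce the statement to Lemma \ref{finParLemma}, which is the analogous identity at the level of the finite-window avoiding laws, by a direct conditioning argument. Two observations make the reduction clean: first, because $h$ is deterministic, the ensembles $\mathcal{L}$ and $\tilde{\mathcal{L}}$ generate the same external $\sigma$-algebra, i.e.\ $\mathcal{F}_{\mathrm{ext}}^{\mathcal{L}}(K \times (a,b)) = \mathcal{F}_{\mathrm{ext}}^{\tilde{\mathcal{L}}}(K \times (a,b))$; second, adding the same function to every curve preserves the order of the curves, so $\mathcal{L}$ is non-intersecting if and only if $\tilde{\mathcal{L}}$ is.

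For the forward implication, assume $\mathcal{L}$ satisfies the partial Brownian Gibbs property and let $F: C(K \times [a,b]) \rightarrow \mathbb{R}$ be bounded and Borel measurable. Set $G(\psi) = F(\psi + h)$, which is also bounded and Borel since translation by a deterministic continuous function is continuous. Using $\tilde{\mathcal{L}}|_{K \times [a,b]} = \mathcal{L}|_{K \times [a,b]} + h$ and the identification of external $\sigma$-algebras, the partial Brownian Gibbs property of $\mathcal{L}$ applied to $G$ gives
\begin{equation*}
\mathbb{E}\bigl[F(\tilde{\mathcal{L}}|_{K \times [a,b]}) \,\big\vert\, \mathcal{F}_{\mathrm{ext}}(K \times (a,b))\bigr] = \mathbb{E}_{\mathrm{avoid}}^{k_1, k_2, a, b, \vec{x}, \vec{y}, f, g}\bigl[F(\mathcal{Q} + h)\bigr],
\end{equation*}
where $\vec{x}, \vec{y}, f, g$ are the boundary data read off from $\mathcal{L}$ as in Definition \ref{DefPBGP}. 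I would then invoke Lemma \ref{finParLemma} with the parabola $h$, and with $A_i = A$ for each $i \in K$, to rewrite the right-hand side as $\mathbb{E}_{\mathrm{avoid}, \vec{A}_K}^{k_1, k_2, a, b, \vec{x} + h(a), \vec{y} + h(b), f + h, g + h}[F(\mathcal{Q})]$, and check that the shifted entrance, exit and boundary data agree with $\tilde{\mathcal{L}}(a)|_K$, $\tilde{\mathcal{L}}(b)|_K$, $\tilde{\mathcal{L}}_{k_1-1}|_{[a,b]}$ and $\tilde{\mathcal{L}}_{k_2+1}|_{[a,b]}$ respectively. This is exactly the conditional expectation required by the partial Brownian Gibbs property with respect to $\vec{A}$-area tilts for $\tilde{\mathcal{L}}$.

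The reverse implication is symmetric: the same argument applied with $h$ replaced by $-h$ and with the roles of $\mathcal{L}$ and $\tilde{\mathcal{L}}$ swapped yields the converse, since Lemma \ref{finParLemma} is an equality of laws and the map $\psi \mapsto \psi + h$ on line ensembles is a continuous bijection with inverse $\psi \mapsto \psi - h$. No serious obstacle is expected, as the substantive content sits inside Lemma \ref{finParLemma} (and ultimately the Cameron-Martin-Girsanov computation behind it); the only items that require care are the bookkeeping of how $\vec{x}, \vec{y}, f, g$ transform under the shift, and the convention $f = \tilde{f} = \infty$ in the case $k_1 = 1$, in which there is no upper barrier on either side and Lemma \ref{finParLemma} applies unchanged.
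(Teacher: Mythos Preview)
Your proposal is correct and follows essentially the same route as the paper's proof: both apply the partial Brownian Gibbs property of $\mathcal{L}$ to the shifted functional $F(\cdot + h)$, invoke Lemma~\ref{finParLemma} to identify the resulting avoiding expectation with the $\vec{A}$-area-tilted law for the shifted boundary data, and note that the external $\sigma$-algebras and the non-intersecting property are preserved under the deterministic shift. The paper likewise only spells out the forward direction and declares the reverse analogous.
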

	\begin{remark}\label{AreaGibbsAiry}
		From \cite[Theorem 3.1]{CH14} we have that the parabolic Airy line ensemble $\mathcal{L}^{\mathrm{Airy}}$ from Section \ref{sec1.2} satisfies the partial Brownian Gibbs property (i.e. Definition \ref{DefPBGP} with $A_i = 0$ for $i \in \mathbb{N}$). From Lemma \ref{GibbsPar} we conclude that $\mathcal{L}^{\mathrm{Airy}}_i(x) + 2^{-1/2}x^2$, which equals $2^{-1/2}\mathcal{A}^{\mathrm{Airy}}_i(x), $ satisfies the partial Brownian Gibbs property with respect to $\vec{A}$-area tilts, where $A_i = 2^{1/2}$ for $i \in \mathbb{N}$.
	\end{remark}
	\begin{proof} As the two directions are analogous, we only prove the forward direction, i.e., we assume that $\mathcal{L}$ satisfies the partial Brownian Gibbs property and proceed to show that $\tilde{\mathcal{L}}$ satisfies the partial Brownian Gibbs property with respect to $\vec{A}$-area tilts. Since $\mathcal{L}$ is non-intersecting (this is part of the statement that it satisfies the partial Brownian Gibbs property), we conclude the same for $\tilde{\mathcal{L}}$. 
		
		Let us fix a finite $K = \{k_1, k_1 + 1, \dots, k_2 \} \subset \Sigma$ with $k_2 \leq N - 1$ (if $\Sigma \neq \mathbb{N}$), $[a,b] \subset \Lambda$ and any bounded Borel-measurable function $F: C(K \times [a,b]) \rightarrow \mathbb{R}$. We let $T_h: C(K \times [a,b]) \rightarrow C(K \times [a,b]) $ be the (continuous) function such that for $\Psi \in C(K \times [a,b])$,
		$$T_h(\Psi)(i,s) = \Psi(i,s) + h(s) \mbox{ for $i \in K$ and $s \in [a,b]$}.$$
		From the partial Brownian Gibbs property, we know that $\mathbb{P}$-almost surely
		\begin{equation}\label{W1}
			\mathbb{E} \left[ F(T_h(\mathcal{L}|_{K \times [a,b]})) {\big \vert} \mathcal{F}_{\mathrm{ext}} (K \times (a,b))  \right] =\mathbb{E}_{\mathrm{avoid}}^{k_1, k_2, a,b, \vec{x}, \vec{y}, f, g} \bigl[ F(T_h(\mathcal{Q})) \bigr],
		\end{equation}
		where $D_{K,a,b} = K \times (a,b)$ and $D_{K,a,b}^c = (\Sigma \times \Lambda) \setminus D_{K,a,b}$,
		$$\mathcal{F}_{\mathrm{ext}} (K \times (a,b)) = \sigma \left \{ \mathcal{L}_i(s): (i,s) \in D_{K,a,b}^c \right\},$$
		$\vec{x} = (\mathcal{L}_{k_1}(a), \dots, \mathcal{L}_{k_2}(a))$, $\vec{y} = (\mathcal{L}_{k_1}(b), \dots, \mathcal{L}_{k_2}(b))$, $f = \mathcal{L}_{k_1 - 1}[a,b]$ with the convention that $f = \infty$ if $k_1 - 1 \not \in \Sigma$, and $g = \mathcal{L}_{k_2 +1}[a,b]$. Using Lemma \ref{finParLemma} and the definition of $\tilde{\mathcal{L}}$ we see that (\ref{W1}) is equivalent to
		\begin{equation}\label{W2}
			\mathbb{E} \left[ F(\tilde{\mathcal{L}}|_{K \times [a,b]}) {\big \vert} \mathcal{F}_{\mathrm{ext}} (K \times (a,b))  \right] =\mathbb{E}_{\mathrm{avoid}, \vec{A}}^{k_1, k_2, a,b, \vec{u}, \vec{v},\tilde{f}, \tilde{g}} \bigl[ F(\mathcal{Q}) \bigr],
		\end{equation}
		where $\vec{u} = (\tilde{\mathcal{L}}_{k_1}(a), \dots, \tilde{\mathcal{L}}_{k_2}(a))$, $\vec{v} = (\tilde{\mathcal{L}}_{k_1}(b), \dots, \tilde{\mathcal{L}}_{k_2}(b))$, $\tilde{f} = \tilde{\mathcal{L}}_{k_1 - 1}[a,b]$ with the convention that $\tilde{f} = \infty$ if $k_1 - 1 \not \in \Sigma$, and $\tilde{g} = \tilde{\mathcal{L}}_{k_2 +1}[a,b]$. Finally, we observe that since $\tilde{\mathcal{L}}$ is a deterministic shift of $\mathcal{L}$, we have 
		$$\mathcal{F}_{\mathrm{ext}} (K \times (a,b)) = \sigma \left \{ \tilde{\mathcal{L}}_i(s): (i,s) \in D_{K,a,b}^c \right\},$$
		which together with (\ref{W2}) shows that $\mathcal{L}$ satisfies (\ref{PBGPTower}). As $K,a,b$ were arbitrary, this proves the lemma.
	\end{proof}

	%
	%
	\section{Gibbs property for the Dyson Ferrari--Spohn diffusion}\label{sec3} In this section we seek to prove that the Dyson Ferrari--Spohn diffusion $\mathcal{X}^N$ from Section \ref{sec1.2} satisfies the partial Brownian Gibbs property with respect to area tilts from Definition \ref{DefPBGP}. The precise statement can be found in Proposition \ref{DFSGibbs}, and its proof relies on \cite[Theorem A]{IVW}, which shows that a certain rescaled version of $\mathcal{X}^N$ can be obtained as a weak limit of discrete line ensembles of avoiding random walks with discrete area tilts. The essential difficulty lies in showing that the discrete area tilt Gibbs property becomes the partial Brownian Gibbs property with respect to $\vec{A}$-area tilts in the limit that takes the random walks to Brownian motions. 
	
	%
	%
	\subsection{Discrete area tilts}\label{sec3.1} In this section we define certain line ensembles of discrete random walks with area tilts, which will be used to approximate a rescaled version of $\mathcal{X}^N$. We also show that if the boundary data of these ensembles converges under an appropriate scaling, then one recovers the measures in Definition \ref{DefWeightedLaw}. We mention here that our choice for discrete ensembles is quite limited (i.e. we will pick a very specific jump distribution and area tilts), which is made so that the results of \cite{IVW} and \cite{Ser1} are directly applicable. It is possible to generalize the results of this section to quite generic random walks, but we do not pursue this here.
	
	\begin{definition}\label{walktilt}
		Fix $k_1,k_2\in\mathbb{N}$ with $k_1\leq k_2$ and set $k = k_2-k_1+1$. Fix $a,b\in\mathbb{Z}$ with $a<b$, $\vec{x},\vec{y}\in W_k^\circ \cap \mathbb{N}^k$ with $|x_i-y_i|\leq 2(b-a)$ for each $i \in \llbracket k_1, k_2 \rrbracket$. We also fix $g : \llbracket a,b\rrbracket \to \mathbb{Z}_{\geq 0}$ and $f : \llbracket a, b \rrbracket \rightarrow \mathbb{Z}_{\geq 0} \cup \{ \infty\}$ such that $f(x) > g(x)$ for $x \in \llbracket a,b\rrbracket$. Let $X = (X_{k_1},\dots,X_{k_2})$ denote $k$ independent random walk bridges on $[a,b]$ with $X(a)=\vec{x}$ and $X(b)=\vec{y}$, with increments distributed according to a fixed probability measure with support $\{0,\pm 1, \pm 2\}$ which has mean $0$ and variance $1$. Let $\mathbb{P}^{k_1,k_2,a,b,\vec{x},\vec{y},f,g}_{\mathrm{avoid},\mathrm{walk}}$ denote the law of $X$, conditioned on the event $\{f(j) > X_{k_1}(j) > \cdots > X_{k_2}(j) > g(j) \mbox{ for all } j\in\llbracket a,b\rrbracket\}$, when this event is non-empty and hence of positive probability. Fix $\lambda \geq 0$, and define the probability measures $\mathbb{P}^{k_1,k_2,a,b,\vec{x},\vec{y},f,g}_{\mathrm{avoid},\mathrm{walk},\lambda}$ by
		\begin{equation}\label{walkareadef}
			\frac{\mathrm{d} \mathbb{P}^{k_1,k_2,a,b,\vec{x},\vec{y},f,g}_{\mathrm{avoid},\mathrm{walk}, \lambda}}{\mathrm{d}\mathbb{P}^{k_1,k_2,a,b,\vec{x},\vec{y},f,g}_{\mathrm{avoid}, \mathrm{walk}}} = \frac{1}{Z^{k_1,k_2,a,b,\vec{x},\vec{y},f,g}_{\mathrm{avoid}, \mathrm{walk},\lambda}} \exp\left(-\sum_{i=k_1}^{k_2} \lambda \sum_{j=a}^{b-1} X_i(j)\right).
		\end{equation}
		Note we need not subtract the bottom curve $g$ in the area tilt here, as we have assumed $g\geq 0$ and thus conditioned on $X_i$ being strictly positive for all $i$. This implies that $Z^{k_1,k_2,a,b,\vec{x},\vec{y},f,g}_{\mathrm{avoid,walk},\lambda} \in (0,1]$ as long as $Z^{k_1,k_2,a,b,\vec{x},\vec{y},f,g}_{\mathrm{avoid,walk}} > 0$, and in this case the measures are well-defined.
	\end{definition}
	
	The measures in Definition \ref{walktilt} are {\em a priori} measures on a finite-dimensional vector space, indexed by $\llbracket k_1, k_2 \rrbracket \times \llbracket a, b \rrbracket$. By linearly interpolating the points $(x, X_i(x))$ for $x \in \llbracket a ,b \rrbracket$ and each $i \in \llbracket k_1, k_2 \rrbracket$ we can naturally view the measures from Definition \ref{walktilt} as ones on the space $C(\llbracket k_1, k_2 \rrbracket \times [a,b])$. That is, $X$ can be viewed as a $\llbracket k_1, k_2 \rrbracket$-indexed line ensemble on $[a,b]$ in the sense of Definition \ref{DefLE}.

	\begin{remark}\label{discgibbs}
		It is straightforward to see that the measures in Definition \ref{walktilt} satisfy a Gibbs property analogous to that in Definition \ref{DefPBGP}, which we will call the \textit{Gibbs property with respect to discrete area tilts}. Namely, let $X$ have law $\mathbb{P}^{k_1,k_2,a,b,\vec{x},\vec{y},f,g}_{\mathrm{avoid,walk},\lambda}$. If $K = \llbracket k_1',k_2'\rrbracket \subseteq \llbracket k_1,k_2\rrbracket$, $\Lambda' = \llbracket c,d\rrbracket \subseteq \llbracket a,b\rrbracket$ with $c,d \in \mathbb{Z}, c < d$, and $\mathcal{F}_{\mathrm{ext}}^X(K\times\Lambda')$ is the $\sigma$-algebra generated by $X_i(j)$ for $i\in K'$ and $j\notin \llbracket c+1,d-1\rrbracket$ and by $X_i(j)$ for $i\notin K'$ and $j \in \llbracket a, b \rrbracket$, then
		\begin{equation}
			\mathrm{Law}\left(X|_{K\times \Lambda'} \, \big| \, \mathcal{F}_{\mathrm{ext}}^X(K'\times\Lambda')\right) = \mathbb{P}_{\mathrm{avoid},\mathrm{walk},\lambda}^{k_1',k_2',c,d,\vec{x}\,',\vec{y}\,',f',g'},
		\end{equation}
		where $\vec{x}\,' = (X_{k_1'}(c), \dots, X_{k_2'}(c))$, $\vec{y}\,' = (X_{k_1'}(d), \cdots, X_{k_2'}(d))$, $f' = X_{k_1'-1}\llbracket c,d\rrbracket$ (with $X_0 = f$), $g' = X_{k_2'+1}\llbracket c,d\rrbracket$ (with $X_{k_2 + 1} = g$).
	\end{remark}

	We end this section with the following technical lemma, which shows that under appropriate scaling the measures in Definition \ref{walktilt} converge to those of Definition \ref{DefWeightedLaw}. The lemma is quite similar to \cite[Lemma 2.1]{Ser2} and \cite[Lemma 5.5]{DM}, but we state and prove it here in the present notation for the sake of completeness.
	\begin{lemma}\label{walkconv}
		Fix $k_1,k_2,k,a,b,\vec{x},\vec{y},f,g$ as in Definition \ref{DefWeightedLaw} and assume $g \geq 0$. Let $a_M = \lfloor aM^{2/3}\rfloor$, $b_M = \lceil bM^{2/3}\rceil$. Suppose $\vec{x}_M,\vec{y}_M$ are sequences in $W_k^\circ \cap \mathbb{N}^k$ such that $M^{-1/3}\vec{x}_M \to \vec{x}$ and $M^{-1/3}\vec{y}_M \to \vec{y}$ as $M\to\infty$. Let $g_M: \llbracket a_M, b_M\rrbracket \to \mathbb{Z}_{\geq 0}$ and $f_M:\llbracket a_M, b_M\rrbracket \to \mathbb{Z}_{\geq 0} \cup \{\infty\} $ be such that and $M^{-1/3}g_M(tM^{2/3})\to g(t)$ and $M^{-1/3}f_M(tM^{2/3})\to f(t)$ uniformly in $t\in[a,b]$ (if $f \equiv \infty$, this means that $f_M = \infty$ for all large $M$). Then the following hold:
		\begin{enumerate}
			\item There exists $M_0 \in \mathbb{N}$ such that for $M \geq M_0$ the measures $\mathbb{P}^{k_1,k_2,a_M,b_M,\vec{x}_M,\vec{y}_M,f_M,g_M}_{\mathrm{avoid,walk},1/M}$ as in Definition \ref{walktilt} are well-defined.
			\item If $Z^M$ has law $\mathbb{P}^{k_1,k_2,a_M,b_M,\vec{x}_M,\vec{y}_M,f_M,g_M}_{\mathrm{avoid,walk},1/M}$ for $M \geq M_0$ and we define
			\[
			\mathcal{Z}^M(t) = M^{-1/3}Z^M(tM^{2/3}), \quad t\in[a,b],
			\]
			as a $\llbracket k_1, k_2 \rrbracket$-indexed line ensemble on $[a,b]$, then $\mathcal{Z}^M$ converges in law to $\mathbb{P}^{k_1,k_2,a,b,\vec{x},\vec{y},f,g}_{\mathrm{avoid},\vec{A}}$ as $M\to\infty$, where $A_i = 1$ for $i\in \llbracket k_1,k_2\rrbracket$.
		\end{enumerate}
	\end{lemma}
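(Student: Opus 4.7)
The strategy is to express both the discrete and continuum conditioned-and-tilted laws as Radon-Nikodym derivatives with respect to the laws of free bridges, and to pass from discrete to continuum by combining Donsker's invariance principle with Skorokhod's representation theorem to obtain an almost-sure coupling. Under the assumed mean-zero, variance-one increments supported on $\{0,\pm 1,\pm 2\}$, Donsker's theorem implies that the rescaled free random walk bridge $t\mapsto M^{-1/3} B^M_i(tM^{2/3})$ converges weakly on $C([a,b])$ to a free Brownian bridge with diffusion parameter $1$ and endpoints $x_i,y_i$, jointly in $i\in\llbracket k_1,k_2\rrbracket$ by independence. By Skorokhod's representation theorem I may assume this convergence is uniform and almost sure on a common probability space. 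For part (1), the hypotheses of Definition~\ref{DefAvoidingLaw} imply that the continuum avoidance event has strictly positive probability under the free Brownian bridge law; together with the coupling and the uniform convergences $M^{-1/3}f_M(\cdot M^{2/3})\to f$ and $M^{-1/3}g_M(\cdot M^{2/3})\to g$, this transfers to positivity of the discrete avoidance event for all $M\geq M_0$ sufficiently large. Since the discrete area-tilt weight in (\ref{walkareadef}) takes values in $(0,1]$ (using $g\geq 0$, so the exponent is non-positive on the avoidance event), the normalizer is strictly positive and the measures in part (1) are well-defined.

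For part (2), a direct computation using the rescaling $X_i(j) = M^{1/3}\mathcal{Z}^M_i(jM^{-2/3})$ shows that $(1/M)\sum_{j=a_M}^{b_M-1} X_i(j) = M^{-2/3}\sum_{j=a_M}^{b_M-1}\mathcal{Z}^M_i(jM^{-2/3})$, a Riemann sum on $[a,b]$ with spacing $M^{-2/3}$ which converges to $\int_a^b \mathcal{Z}_i(u)\,du$ for any uniform limit $\mathcal{Z}_i$. Combining (\ref{walkareadef}) with (\ref{weightedToAvoid}), and noting that the deterministic factor $\exp(\sum_i \int_a^b g(u)\,du)$ arising from subtracting $g$ in (\ref{weightedToAvoid}) cancels between numerator and denominator, for any bounded continuous $G$ we can write
\begin{equation*}
\mathbb{E}[G(\mathcal{Z}^M)] = \frac{\mathbb{E}[G(\mathcal{Y}^M)\,\chi^M(\mathcal{Y}^M)\,\Phi^M(\mathcal{Y}^M)]}{\mathbb{E}[\chi^M(\mathcal{Y}^M)\,\Phi^M(\mathcal{Y}^M)]},
\end{equation*}
where $\mathcal{Y}^M$ is the rescaled free bridge, $\chi^M$ is the discrete avoidance indicator with the rescaled barriers, and $\Phi^M(\mathcal{Y}^M) = \exp\bigl(-\sum_i M^{-2/3}\sum_j \mathcal{Y}^M_i(jM^{-2/3})\bigr)$; an analogous formula holds for $\mathbb{E}[G(\mathcal{Q})]$ in terms of the free Brownian bridge $B$, with continuum indicator $\chi$ and weight $\Phi(B) = \exp\bigl(-\sum_i \int_a^b B_i(u)\,du\bigr)$. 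On the Skorokhod coupling space $\mathcal{Y}^M\to B$ uniformly almost surely, whence $G(\mathcal{Y}^M)\to G(B)$, $\Phi^M(\mathcal{Y}^M)\to \Phi(B)$, and $\chi^M(\mathcal{Y}^M)\to\chi(B)$ all almost surely (the last using path continuity and non-degeneracy, discussed below). Since all three functionals are uniformly bounded in $M$, dominated convergence yields convergence of both numerator and denominator and hence $\mathbb{E}[G(\mathcal{Z}^M)]\to\mathbb{E}[G(\mathcal{Q})]$, proving part (2).

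The main obstacle is verifying that $\chi^M(\mathcal{Y}^M)\to\chi(B)$ almost surely under the Brownian bridge coupling, i.e.\ that the boundary of the continuum avoidance set has zero measure under the free bridge law. Concretely one must exclude configurations in which the limit Brownian bridges touch $f$ or $g$, or cross one another tangentially. This non-degeneracy is a standard ingredient, following from path continuity, the Markov property, and the strict inequalities $f>g$ on $[a,b]$, $f(a)>x_1$, $f(b)>y_1$, $g(a)<x_k$, $g(b)<y_k$, together with $\vec{x},\vec{y}\in\weyl_k$; closely analogous arguments have been carried out in \cite[Lemma~5.5]{DM} and \cite[Lemma~2.1]{Ser2}, whose structure my proof will follow.
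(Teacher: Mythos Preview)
Your argument is correct in outline and the approach is valid, but it differs from the paper's in one structural way that is worth noting. The paper does \emph{not} work relative to the free bridge law. Instead it invokes \cite[Lemma~3.10]{Ser1} directly, which already yields both (i) well-definedness of the discrete avoiding measures for large $M$ and (ii) weak convergence of the rescaled \emph{avoiding} (untilted) ensemble $\mathcal{Q}^M$ to $\mathbb{P}^{k_1,k_2,a,b,\vec{x},\vec{y},f,g}_{\mathrm{avoid}}$. With this as the reference measure, the only remaining work is the Riemann-sum convergence of the area weight and a bounded convergence argument; the delicate step of showing $\chi^M(\mathcal{Y}^M)\to\chi(B)$ a.s.\ (your ``main obstacle'') is absorbed into the cited lemma and never appears. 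Your route, by contrast, rederives the avoiding convergence from Donsker plus a boundary-of-event argument, which is more self-contained but longer, and you correctly flag that this is where the real content lies (and correctly point to \cite[Lemma~5.5]{DM}, \cite[Lemma~2.1]{Ser2} for the template).

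One small imprecision: you assert that ``all three functionals are uniformly bounded in $M$,'' but $\Phi^M(\mathcal{Y}^M)$ alone is \emph{not} bounded, since the free bridges $\mathcal{Y}^M$ can take negative values and the exponent can be positive. What is bounded is the \emph{product} $\chi^M(\mathcal{Y}^M)\,\Phi^M(\mathcal{Y}^M)$: on $\{\chi^M=1\}$ the rescaled paths lie above $M^{-1/3}g_M\geq 0$, so the exponent is nonpositive and the product is in $[0,1]$. This is exactly what dominated convergence needs, so the argument goes through once phrased this way.
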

	
	\begin{proof}
		
		From \cite[Lemma 3.10]{Ser1} (with $\llbracket\alpha,\beta\rrbracket = \llbracket -2,2\rrbracket$, $p=0$, $\sigma = 1$, and $T = M^{2/3}$), we have that there exists $M_0 \in \mathbb{N}$ such that for $M \geq M_0$ the event $\{f_M(j) > X_{k_1}(j) > \cdots > X_{k_2}(j) > g_M(j) \mbox{ for all } j\in\llbracket a,b\rrbracket\}$ is non-empty, which means that both $\mathbb{P}^{k_1,k_2,a_M,b_M,\vec{x}_M,\vec{y}_M,f_M,g_M}_{\mathrm{avoid,walk}}$ and $\mathbb{P}^{k_1,k_2,a_M,b_M,\vec{x}_M,\vec{y}_M,f_M,g_M}_{\mathrm{avoid,walk},1/M}$ are well-defined for such $M$, and we assume $M \geq M_0$ in the sequel.
		
		Let $Q^M$ have law $\mathbb{P}^{k_1,k_2,a_M,b_M,\vec{x}_M,\vec{y}_M,f_M,g_M}_{\mathrm{avoid,walk}}$, and define $\mathcal{Q}^M(t) = M^{-1/3}Q^M(tM^{2/3})$ for $tM^{2/3}\in[a_M,b_M]$. From \cite[Lemma 3.10]{Ser1}, $\mathcal{Q}^M|_{\llbracket k_1,k_2\rrbracket\times[a,b]}$ converges weakly to a random variable $\mathcal{Q}$ with law $\mathbb{P}^{k_1,k_2,a,b,\vec{x},\vec{y},f,g}_{\mathrm{avoid}}$ as $M\to\infty$. Since $C(\llbracket k_1,k_2\rrbracket\times[a,b])$ is a Polish space, see \cite[Lemma 2.2]{DFFS}, we may apply the Skorohod representation theorem, \cite[Theorem 6.7]{Billing}. We may move to a different probability space $(\Omega,\mathcal{F},\mathbb{P})$ that supports $C(\llbracket k_1,k_2\rrbracket\times[a,b])$-valued random variables with the laws of $\mathcal{Q}^M|_{\llbracket k_1,k_2\rrbracket\times[a,b]}$ and $\mathcal{Q}$, which we denote again by $\mathcal{Q}^M$ and $\mathcal{Q}$ for convenience, such that $\mathcal{Q}^M \to \mathcal{Q}$ uniformly $\mathbb{P}$-a.s. In the following we write $\mathbb{E}_M$ for expectation with respect to the law of $Z^M$ and $\mathbb{E}$ for expectation with respect to $\mathbb{P}$.
		
		By Definition \ref{walktilt}, for any bounded continuous $F : C(\llbracket k_1,k_2\rrbracket\times[a,b])\to\mathbb{R}$ we can write
		\begin{equation}\label{tiltRN}
			\mathbb{E}_M \left[F(\mathcal{Z}^M)\right] = \frac{\mathbb{E}\left[F(\mathcal{Q}^M) \exp \left(-\sum_{i=k_1}^{k_2} \frac{1}{M}\sum_{j=a_M}^{b_M-1} \mathcal{Q}^M_i(j)\right)\right]}{\mathbb{E}\left[\exp\left(-\sum_{i=k_1}^{k_2} \frac{1}{M}\sum_{j=a_M}^{b_M-1} \mathcal{Q}^M_i(j)\right)\right]}.
		\end{equation}
		Note that we can rewrite the area tilt in the numerator and denominator as
		\[
		\exp\left(-\sum_{i=k_1}^{k_2} \left( \frac{x_{M,i}}{M} + M^{-2/3}\sum_{j=a_M+1}^{b_M-1} \mathcal{Q}^M_i(jM^{-2/3}) \right)\right).
		\]
		The first term inside the outer sum goes to 0 as $M\to\infty$, since $M^{-1/3}\vec{x}_M$ converges by assumption. For the inner sum, note that $(a_M+1)M^{-2/3}, (b_M -1)M^{-2/3} \in [a,b]$, and using the uniform convergence of $\mathcal{Q}_i^M$ to $\mathcal{Q}_i$ on $[a,b]$ we can then estimate
		\begin{align*}
			\left|M^{-2/3}\sum_{j = a_M + 1}^{b_M - 1} \mathcal{Q}_i^M(jM^{-2/3}) - \int_a^b \mathcal{Q}_i(t)\,\mathrm{d}t\right| &\leq M^{-2/3}\sum_{j = a_M+1}^{b_M-1} \left|\mathcal{Q}_i^M(jM^{-2/3}) - \mathcal{Q}_i(jM^{-2/3})\right|\\
			&\qquad + \left| M^{-2/3}\sum_{j = a_M + 1}^{b_M - 1} \mathcal{Q}_i(jM^{-2/3}) - \int_a^b \mathcal{Q}_i(t)\,\mathrm{d}t\right|\\
			&\leq (b-a+2) \left\lVert \mathcal{Q}_i^M - \mathcal{Q}_i\right\rVert_\infty + o_M(1)\\
			&= o_M(1).
		\end{align*}
		Here $\lVert\cdot\rVert_\infty$ denotes the sup norm, and $o_M(1)$ denotes a quantity that tends to $0$ almost surely as $M\to\infty$. We have observed that the expression in the second line is $o_M(1)$ since the sum is a Riemann sum for the continuous function $\mathcal{Q}_i$ over $[a,b]$ and therefore converges to the integral.
		
		The last observation implies for any bounded continuous $G: C(\llbracket k_1,k_2\rrbracket\times[a,b])\to\mathbb{R}$ that $\mathbb{P}$-a.s.
		$$ \lim_{M \rightarrow \infty}G(\mathcal{Q}^M) \exp \left(-\sum_{i=k_1}^{k_2} \frac{1}{M}\sum_{j=a_M}^{b_M-1} Q^M_i(j)\right)  = G(\mathcal{Q}) \exp \left(-\sum_{i=k_1}^{k_2} \int_a^b \mathcal{Q}_i(t)\,\mathrm{d}t\right).$$
		Using this with $G = F$ and $G = 1$ in (\ref{tiltRN}), the bounded convergence theorem gives
		\begin{equation*}
			\begin{split}
				&\lim_{M \rightarrow \infty} \mathbb{E}_M \left[F(\mathcal{Z}^M)\right] = \frac{\mathbb{E}\left[F(\mathcal{Q}) \exp \left(-\sum_{i=k_1}^{k_2} \int_a^b \mathcal{Q}_i(t)\,\mathrm{d}t\right)\right]}{\mathbb{E}\left[\exp\left(-\sum_{i=k_1}^{k_2} \int_a^b \mathcal{Q}_i(t)\,\mathrm{d}t \right)\right]} \\
				&\qquad = \frac{\mathbb{E}\left[F(\mathcal{Q}) \exp \left(-\sum_{i=k_1}^{k_2} \int_a^b (\mathcal{Q}_i(t)-g(t))\,\mathrm{d}t\right)\right]}{\mathbb{E}\left[\exp\left(-\sum_{i=k_1}^{k_2} \int_a^b (\mathcal{Q}_i(t)-g(t))\,\mathrm{d}t \right)\right]},
			\end{split}
		\end{equation*}
		where in the last equality we are free to subtract $g$ in the numerator and denominator as it divides out. The second line is the expectation of the functional $F$ under $\mathbb{P}^{k_1,k_2,a,b,\vec{x},\vec{y},f,g}_{\mathrm{avoid},\vec{A}}$ with $A_i = 1$ for $i\in \llbracket k_1,k_2\rrbracket $ by Definition \ref{DefWeightedLaw}, and the proof is complete.
		
	\end{proof}

	%
	%
	
	\subsection{Gibbs property for $\mathcal{X}^N$}\label{sec3.2} In this section we show that the Dyson Ferrari--Spohn diffusions satisfy the partial Brownian Gibbs property with respect to area tilts of Definition \ref{DefPBGP}. Our argument relies on the following result, which is a special case of \cite[Theorem A]{IVW}.
	
	\begin{proposition}\label{ConvDFS}
		Fix any $N\in\mathbb{N}$ and for $M \in \mathbb{N}$ define $\vec{z}_M = (M^{1/3},2M^{1/3},\dots,NM^{1/3})$. Let $Y^M$ have law $\mathbb{P}^{1,N,-M,M,\vec{z}_M,\vec{z}_M,\infty,0}_{\mathrm{avoid,walk},1/M}$ as in Definition \ref{walktilt} $($here $f(x) = \infty$ and $g(x) = 0$ for $x \in \llbracket -M , M \rrbracket)$. Viewing as usual $Y^M$ as a $\llbracket 1, N \rrbracket$-indexed line ensemble on $[-M, M]$, we define the $C(\llbracket 1, N \rrbracket \times [-M^{1/3}, M^{1/3}])$-valued random variable $\mathcal{Y}^M$ via
		\begin{equation}\label{rescaled}
			\mathcal{Y}^M(t) = M^{-1/3}Y^M(tM^{2/3}).
		\end{equation}
		Then $\mathcal{Y}^M$ converges as $M\to\infty$, in the topology of uniform convergence on compact sets, to a $\llbracket 1, N \rrbracket$-indexed line ensemble $\mathcal{Y}$ on $\mathbb{R}$, where $\mathcal{Y}_i(t) = 2^{-1/3}  \mathcal{X}^N_i(2^{2/3}t)$ for $i \in \llbracket 1, N \rrbracket$ and $\mathcal{X}^N$ is the Dyson Ferrari--Spohn diffusion of Section \ref{sec1.2}.
	\end{proposition}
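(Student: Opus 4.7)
The plan is to derive the statement as a direct consequence of \cite[Theorem A]{IVW}. That result establishes weak uniform-on-compacts convergence of a family of diffusively rescaled ensembles of non-intersecting random walks above a wall with linear area tilts to a scalar multiple of the Dyson Ferrari-Spohn diffusion. The proof thus amounts to (i) checking that the setup in the statement falls within the framework of \cite[Theorem A]{IVW} and (ii) tracking constants so as to identify the limit as $\mathcal{Y}_i(t) = 2^{-1/3}\mathcal{X}^N_i(2^{2/3}t)$.

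For step (i), one must verify the hypotheses of \cite[Theorem A]{IVW} for our choice of data. The increment distribution in Definition \ref{walktilt} has mean $0$, variance $1$, and finite exponential moments (being supported on $\{0,\pm 1, \pm 2\}$), which matches the assumption on the walk law in \cite{IVW}. The wall is at $0$, corresponding to $g\equiv 0$ in Definition \ref{walktilt}, and there is no upper boundary $(f\equiv\infty)$. The area tilt parameter $1/M$ is precisely the scaling needed so that under the $1$:$2$:$3$ rescaling $(t,x)\mapsto (tM^{2/3}, xM^{-1/3})$ the discrete area tilt in \eqref{walkareadef} passes to a continuum area tilt of unit strength in the limit, in the spirit of Lemma \ref{walkconv}. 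Finally, the boundary data $\vec{z}_M = (M^{1/3}, 2M^{1/3}, \dots, NM^{1/3})$ rescales under $M^{-1/3}$ to the fixed configuration $(1,2,\dots,N)$; since $\mathcal{X}^N$ is stationary, \cite[Theorem A]{IVW} applies to this choice (or any other boundary data of order $M^{1/3}$) and yields weak convergence on compact time intervals to a stationary process, removing dependence on the boundary data in the $M\to\infty$ limit.

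For step (ii), the only nontrivial issue is to reconcile the scaling convention of \cite{IVW} with ours: the Dyson Ferrari-Spohn diffusion $\mathcal{X}^N$ in this paper is defined through the generator \eqref{S1GEN} with diffusion coefficient $1/2$, whereas the natural limiting process arising from the unit-variance walks of Definition \ref{walktilt} under the scaling \eqref{rescaled} has a different normalization. A direct computation of the limiting generator, or alternatively matching the fixed-time densities (both are proportional to $\Delta(\vec{x})^2\mathbf{1}\{x_1>\cdots>x_N>0\}$ up to an affine change of variables), identifies the limiting process as $t\mapsto 2^{-1/3}\mathcal{X}^N_i(2^{2/3}t)$; the factor $2^{2/3}$ in time and $2^{-1/3}$ in space is the unique rescaling that transforms the generator obtained from the unit-variance walk limit into the generator $L_N$ in \eqref{S1GEN}.

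The main obstacle is the bookkeeping of constants in step (ii); once this is pinned down, the convergence assertion is immediate from \cite[Theorem A]{IVW}. Since the rescaled ensembles $\mathcal{Y}^M$ are defined on $[-M^{1/3}, M^{1/3}]$, which exhausts $\mathbb{R}$ as $M\to\infty$, uniform convergence on any fixed compact subset of $\mathbb{R}$ follows for all sufficiently large $M$, giving weak convergence in $C(\llbracket 1,N\rrbracket\times\mathbb{R})$ with the topology of uniform convergence on compacts.
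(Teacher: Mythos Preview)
Your proposal is correct and follows essentially the same approach as the paper: invoke \cite[Theorem A]{IVW} to obtain convergence of $\mathcal{Y}^M$ to a stationary diffusion with an explicit generator, and then match constants to identify the limit with $t\mapsto 2^{-1/3}\mathcal{X}^N_i(2^{2/3}t)$. The paper carries out your step (ii) more concretely by computing the eigenfunctions of the Sturm-Liouville operator $\mathsf{L}^\alpha = \tfrac{1}{2}\frac{d^2}{dr^2} - \alpha r$ as $\phi_n^\alpha(x)\propto \mathrm{Ai}((2\alpha)^{1/3}x-\omega_n)$, so that the $2^{1/3}$ discrepancy arises from the drift term (the IVW limit uses $\alpha=1$, giving $\mathrm{Ai}(2^{1/3}x-\omega_n)$, while $\Delta$ in \eqref{S1GEN} uses $\mathrm{Ai}(x-\omega_n)$) rather than from the diffusion coefficient, which is $1/2$ in both generators.
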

	\begin{proof} From \cite[Theorem A]{IVW} we know that $\mathcal{Y}^M$ converges as $M\rightarrow \infty$ to the stationary diffusion $\mathcal{Y}$ on $\mathbb{R}$, whose fixed-time distribution has density proportional to $\Delta^{\mathcal{Y}}(\vec{x})^2 \cdot {\bf 1} \{ x_1 > \cdots > x_N > 0\}$, and whose infinitesimal generator is given by
		\begin{equation}\label{B1}
			L^{\mathcal{Y}} = \sum_{k=1}^N \left(\frac{1}{2}\frac{\partial^2}{\partial x_k^2} + \frac{\partial \log \Delta^{\mathcal{Y}}(\vec{x})}{ \partial x_k} \cdot \frac{\partial}{\partial x_k}\right).
		\end{equation}
		Here $\Delta^{\mathcal{Y}}(\vec{x}) = \det[\varphi^{\mathcal{Y}}_i(x_{N-j+1})]_{i,j=1}^N$, and $\{ \varphi^{\mathcal{Y}}_i\}_{i = 1}^\infty$ is the complete orthonormal family of eigenfunctions on $L^2(\mathbb{R}_+)$ with zero boundary conditions at $0$ for the Sturm-Liouville operator 
		\begin{equation}\label{B2}
			\mathsf{L}^{1} = \frac{1}{2} \frac{\operatorname{d}^2}{\mathrm{d}r^2} - q^{1}(r), \mbox{ where for $\alpha > 0$, }\, \mathsf{L}^{\alpha} = \frac{1}{2} \frac{\operatorname{d}^2}{\mathrm{d} r^2} - q^{\alpha }(r) \mbox{ with } q^{\alpha}(r) =  \alpha r.
		\end{equation}
		We mention that the indices of the $x$ variables are reversed compared to \cite[Theorem A]{IVW} as our convention is that $x_1 > x_2 > \cdots > x_N > 0,$ while \cite{IVW} assumes $x_N > \cdots > x_1 > 0$.
		
		For $\alpha > 0$ the complete orthonormal family of eigenfunctions of $\mathsf{L}^{\alpha}$ on $L^2(\mathbb{R}_+)$ with zero boundary conditions is given by
		\begin{equation}\label{B3}
			\{ \phi^{\alpha}_n \}_{n=1}^\infty \mbox{, where } \phi_{n}^{\alpha}(x) = c(n,\alpha) \cdot \mathrm{Ai}((2\alpha)^{1/3} x - \omega_n) \mbox{ and } \mathsf{L}^{\alpha} \phi_n^{\alpha} = -\omega_n  \alpha^{2/3} 2^{-1/3} \cdot \phi_n^{\alpha}. 
		\end{equation}
		As in Section \ref{sec1.2}, $\omega_n$ are the zeros of the Airy function $\mathrm{Ai}$ and $c(n,\alpha)$ are constants such that $\int_{0}^{\infty} (\phi_n^{\alpha}(x))^2\,\mathrm{d}x = 1$ for all $n \in \mathbb{N}$.
		
		Let us briefly explain the origin of (\ref{B3}). We first note that by a simple change of variables if $\{ \phi^{1/2}_n \}_{n=1}^\infty$ is the complete orthonormal family of eigenfunctions of $\mathsf{L}^{1/2}$, then $\phi^{\alpha}_n(x) = (2\alpha)^{1/6} \cdot \phi_n^{1/2}((2\alpha)^{1/3} x)$ for $n \in \mathbb{N}$ is the one for $\mathsf{L}^{\alpha}$. Thus it suffices to check (\ref{B3}) when $\alpha = 1/2$. In this case, the $\phi_n^{1/2}$ and their corresponding eigenvalues were computed in \cite[Section 4.12]{Titchmarsh46}. We mention that the formulas in \cite[Section 4.12]{Titchmarsh46} are in terms of $J_{\pm 1/3}$ (fractional Bessel functions of the first kind) and to obtain the formula in (\ref{B3}) one needs to apply \cite[10.4.15]{Abramowitz}.\\
		
		What remains is to show that the process $\mathcal{Y}$ has the same distribution as $\mathcal{Z}$, where $\mathcal{Z}_i(t) = 2^{-1/3}  \mathcal{X}^N_i(2^{2/3}t)$ for $i \in \llbracket 1, N \rrbracket$. Since in (\ref{B1}) we have the logarithmic derivative of a determinant involving $\phi_{n}^{1}$, we can rescale the functions in that determinant by arbitrary constants without affecting $L^{\mathcal{Y}}$. In particular, we set $\varphi_n^{\mathcal{Y}}(x) = \mathrm{Ai}(2^{1/3} x - \omega_n)$ for $n \in \mathbb{N}$. Combining the latter with (\ref{S1GEN}), we see by direct computation that $\mathcal{Z}$ has infinitesimal generator $L^{\mathcal{Y}}$ as in (\ref{B1}). We also see from the fixed-time density formula for $\mathcal{X}^N$ above (\ref{S1GEN}), that $\mathcal{Z}$ has the same fixed-time density as $\mathcal{Y}$ and so the two processes have the same distribution.
	\end{proof}
	We now turn to the main result of the section.
	\begin{proposition}\label{DFSGibbs}
		Fix $N\in\mathbb{N}$, and define the $\llbracket 1, N+1 \rrbracket$-indexed line ensemble $\mathcal{L}^N$ on $\mathbb{R}$ via
		$$\mathcal{L}^N_i(t) = \mathcal{X}^N_i(t) \mbox{ for $i = 1, \dots, N$, $t\in\mathbb{R}$, and } \mathcal{L}^N_{N+1}(t) = 0 \mbox{ for $t \in \mathbb{R}$},$$
		where $\mathcal{X}^N = (\mathcal{X}^N_1,\dots,\mathcal{X}^N_N)$ is the Dyson Ferrari--Spohn diffusion. Then, $\mathcal{L}^N$ satisfies the partial Brownian Gibbs property with respect to $\vec{A}$-area tilts, where $A_i = 1/2$ for $i\in \llbracket 1,N\rrbracket$. 
	\end{proposition}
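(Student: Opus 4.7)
The plan is to deduce the partial Brownian Gibbs property with area tilts for $\mathcal{L}^N$ by passing the discrete Gibbs property of Remark \ref{discgibbs} through the weak limit in Proposition \ref{ConvDFS}, and then applying the scaling Lemma \ref{ScalingAGP} to convert the resulting statement for an intermediate ensemble $\mathcal{Y}$ into one for $\mathcal{L}^N$. Specifically, let $Y^M$ be as in Proposition \ref{ConvDFS}, augmented with the zero line $Y^M_{N+1} \equiv 0$ so that it becomes a non-intersecting $\llbracket 1, N+1 \rrbracket$-indexed line ensemble. By Remark \ref{discgibbs} and the choice of area-tilt parameter $1/M$ in Proposition \ref{ConvDFS}, the augmented $Y^M$ satisfies the Gibbs property with respect to discrete area tilts. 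After rescaling to $\mathcal{Y}^M$ via (\ref{rescaled}), and retaining the zero line as $\mathcal{Y}^M_{N+1} \equiv 0$, this translates into a rescaled discrete area-tilted Gibbs property, and Proposition \ref{ConvDFS} shows that $\mathcal{Y}^M$ converges uniformly on compacts to the $\llbracket 1, N+1 \rrbracket$-indexed ensemble $\mathcal{Y}$ with $\mathcal{Y}_i(t) = 2^{-1/3} \mathcal{X}^N_i(2^{2/3} t)$ for $i \in \llbracket 1, N \rrbracket$ and $\mathcal{Y}_{N+1} \equiv 0$.

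I would then show that $\mathcal{Y}$ satisfies the partial Brownian Gibbs property with respect to $\vec{A}\hspace{0.3mm}'$-area tilts, where $A'_i = 1$ for all $i$. Working under a Skorohod coupling on which $\mathcal{Y}^M \to \mathcal{Y}$ uniformly on compacts almost surely, fix a finite $K = \llbracket k_1, k_2 \rrbracket$ with $k_2 \leq N$, an interval $[a,b]$, and bounded continuous test functions $F$ on $C(K \times [a,b])$ and $H$ on $C(\llbracket 1, N+1 \rrbracket \times \mathbb{R})$, with $H$ measurable with respect to $\mathcal{F}_{\mathrm{ext}}(K \times (a,b))$. Setting $a_M = \lfloor a M^{2/3} \rfloor$ and $b_M = \lceil b M^{2/3} \rceil$ and applying the rescaled discrete Gibbs property on $\llbracket a_M, b_M \rrbracket$ gives
\begin{equation*}
\mathbb{E}\left[ H(\mathcal{Y}^M) F(\mathcal{Y}^M|_{K \times [a,b]}) \right] = \mathbb{E}\left[ H(\mathcal{Y}^M) \cdot T_M \right],
\end{equation*}
where $T_M$ is the expectation of $F$ against the rescaled discrete area-tilted avoidance measure on $[a_M/M^{2/3}, b_M/M^{2/3}]$, with boundary data inherited from $\mathcal{Y}^M$. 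Lemma \ref{walkconv}, combined with the almost sure convergence of the random boundary data, implies $T_M \to \mathbb{E}_{\mathrm{avoid}, \vec{A}\hspace{0.3mm}'_K}^{k_1, k_2, a, b, \vec{x}, \vec{y}, f, g}[F(\mathcal{Q})]$ almost surely. Bounded convergence on both sides then yields the tower equality defining the Gibbs property for $\mathcal{Y}$ in the case of continuous $F$ and $H$, and a standard monotone class argument (analogous to Step 1 of the proof of \cite[Lemma 2.13]{DM}) extends this to arbitrary bounded Borel-measurable $F$ and events in $\mathcal{F}_{\mathrm{ext}}$.

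Finally, I apply Lemma \ref{ScalingAGP} to $\mathcal{Y}$ with $\lambda = 2^{-1/3}$, $r = 0$, $u = 0$, for which $F_{\lambda, 0, 0}(\mathcal{Y}_i)(x) = 2^{1/3} \mathcal{Y}_i(2^{-2/3} x) = \mathcal{X}^N_i(x) = \mathcal{L}^N_i(x)$ for $i \in \llbracket 1, N \rrbracket$, and the zero line is preserved since $r = 0$. By the conclusion of Lemma \ref{ScalingAGP}, the area-tilt parameters transform from $A'_i = 1$ into $\lambda^3 A'_i = 1/2$, giving exactly the statement of the proposition.

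The principal obstacle is the passage $T_M \to \mathbb{E}_{\mathrm{avoid}, \vec{A}\hspace{0.3mm}'_K}^{k_1, k_2, a, b, \vec{x}, \vec{y}, f, g}[F(\mathcal{Q})]$ in the presence of random boundary data. Lemma \ref{walkconv} is stated with deterministic data $(\vec{x}_M, \vec{y}_M, f_M, g_M)$, whereas under the Skorohod coupling these data are random and $M$-dependent. Resolving this requires an additional argument that the continuous area-tilted avoidance expectation is jointly continuous in its boundary parameters $(\vec{x}, \vec{y}, f, g)$; this follows from (\ref{freeToAvoid}), (\ref{weightedToAvoid}), the Brownian bridge representation in (\ref{BBDef}), and uniform continuity on the compact set in which the Skorohod-coupled sequences reside almost surely. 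With this continuity in hand, the joint convergence of $\mathcal{Y}^M$ and its boundary data along the Skorohod coupling delivers the required almost sure convergence of $T_M$.
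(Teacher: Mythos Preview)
Your proposal is correct and follows essentially the same route as the paper: reduce to the rescaled ensemble $\mathcal{Y}$ via Lemma \ref{ScalingAGP}, pass the discrete Gibbs property of $Y^M$ through the weak limit of Proposition \ref{ConvDFS} using a Skorohod coupling and Lemma \ref{walkconv}, and conclude via a monotone class argument as in \cite[Lemma 2.13]{DM}. Two small remarks: the ``obstacle'' you flag concerning random boundary data is not a genuine difficulty, since under the Skorohod coupling one simply applies Lemma \ref{walkconv} pathwise for each $\omega$ in the almost-sure event (the data are then deterministic sequences); and you should also note, as the paper does, that the non-intersecting property of $\mathcal{Y}$ is not automatic from the limit but follows a posteriori from the Gibbs identity itself.
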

	
	\begin{proof} Let $\mathcal{Y}$ be the $\llbracket 1, N \rrbracket$-indexed line ensemble $\mathcal{Y}$ on $\mathbb{R}$ in Proposition \ref{ConvDFS}, extended to an $\llbracket 1, N +1 \rrbracket$-indexed line ensemble by setting $\mathcal{Y}_{N+1}(t) = 0$ for $t \in \mathbb{R}$. By Lemma \ref{ScalingAGP}, it suffices to show that $\mathcal{Y}$ satisfies the partial Brownian Gibbs property with respect to $\vec{A}$-area tilts, where $A_i = 1$.
		
		The idea of the proof is simply to use Proposition \ref{ConvDFS}, which shows that $\mathcal{Y}$ is a scaling limit of non-intersecting random walks with area tilts, for which the Gibbs property with respect to discrete area tilts holds. Then the proof is just a matter of checking that this property indeed passes to the Brownian property in the limit, following the proof of \cite[Lemma 2.13]{DM}. 
		
		We must show that $\mathcal{Y}$ is $\mathbb{P}$-a.s. non-intersecting, and satisfies the following. Fix a set $K = \llbracket k_1, k_2\rrbracket \subseteq \llbracket 1,N\rrbracket$, $a,b\in\mathbb{R}$ with $a<b$, and a bounded Borel-measurable function $F : C(K\times[a,b])\to\mathbb{R}$. Then
		\begin{equation}\label{fsbgpF}
			\mathbb{E}\left[F(\mathcal{Y}|_{K\times[a,b]}) \mid \mathcal{F}_{\mathrm{ext}}(K\times(a,b))\right] = \mathbb{E}^{k_1,k_2,a,b,\vec{x},\vec{y},f,g}_{\mathrm{avoid},\vec{A}}[F(\mathcal{Q})],
		\end{equation} 
		where $\vec{x} = (\mathcal{Y}_{k_1}(a),\dots,\mathcal{Y}_{k_2}(a))$, $\vec{y} = (\mathcal{Y}_{k_1}(b),\dots,\mathcal{Y}_{k_2}(b))$, $f = \mathcal{Y}_{k_1-1}[a,b]$ (with $\mathcal{Y}_0 = \infty$), $g = \mathcal{Y}_{k_2+1}[a,b]$, and $\mathcal{Q}$ has law $\mathbb{P}_{\mathrm{avoid},\vec{A}}^{k_1,k_2,a,b,\vec{x},\vec{y},f,g}$. Note that the right hand side in (\ref{fsbgpF}) is well-defined since for any fixed $t\in\mathbb{R}$, the vector $(\mathcal{Y}_1(t), \dots, \mathcal{Y}_{N}(t)) = 2^{-1/3}(\mathcal{X}^N_1(2^{2/3}t),\dots,\mathcal{X}^N_N(2^{1/3}t))$ almost surely lies in $W_{N}^\circ \cap (0,\infty)^{N}$ by the definition in \cite[Section 2.2]{IVW} or alternatively the density formula \cite[(3.12)]{FS}. Note this does not immediately imply that $\mathcal{Y}$ is non-intersecting on the whole line.
		
		However, it does in fact follow from \eqref{fsbgpF} that $\mathcal{Y}$ is $\mathbb{P}$-a.s. non-intersecting. Indeed, take $k_1=1$, $k_2=N$, and $F(f_1,\dots,f_{N}) = \mathbf{1}\{f_1(s) > \cdots > f_{N}(s) > 0 \mbox{ for all }s\in[a,b]\}$. Then the right hand side of \eqref{fsbgpF} is $1$ by definition since $g = 0$, and taking expectation on the left hand side then implies that $(\mathcal{Y}_1,\dots,\mathcal{Y}_{N+1})$ is $\mathbb{P}$-a.s. non-intersecting on $[a,b]$. Taking the countable intersection over integers $a<b$ proves the claim.\\
		
		It remains to prove \eqref{fsbgpF}. By a standard monotone class argument (see Step 1 in the proof of \cite[Lemma 2.13]{DM}), it suffices to verify that if we fix $m\in\mathbb{N}$, $n_1,\dots,n_m\in\llbracket 1,N\rrbracket$, $t_1,\dots,t_m\in\mathbb{R}$, and bounded continuous functions $h_1,\dots,h_m : \mathbb{R}\to\mathbb{R}$, then setting $S = \{i\in\llbracket 1,m\rrbracket : n_i \in K, t_i\in [a,b]\}$ we have
		\begin{equation}\label{fsbgpmain}
			\mathbb{E} \left[ \prod_{i=1}^m h_i(\mathcal{Y}_{n_i}(t_i)) \right] = \mathbb{E} \left[\prod_{i\notin S} h_i(\mathcal{Y}_{n_i}(t_i)) \cdot \mathbb{E}_{\mathrm{avoid},\vec{A}}^{k_1,k_2,a,b,\vec{x},\vec{y},f,g} \left[ \prod_{j\in S} h_j(\mathcal{Q}_{n_j}(t_j)) \right]\right].
		\end{equation}
		
		Let $Y^M$ and $\mathcal{Y}^M$ be as in Proposition \ref{ConvDFS}. Note that the uniform over compacts convergence of $\mathcal{Y}^M$ to $\mathcal{Y}$ implies
		\begin{equation}\label{fsbgplim}
			\mathbb{E} \left[ \prod_{i=1}^m h_i(\mathcal{Y}_{n_i}(t_i)) \right]  = \lim_{M\to\infty} \mathbb{E} \left[ \prod_{i=1}^m h_i(\mathcal{Y}^M_{n_i}(t_i)) \right].
		\end{equation}
		
		Now let $a_M = \lfloor a M^{2/3}\rfloor$, $b_M = \lceil b M^{2/3}\rceil $, $\vec{x}_M = Y^M(a_M)$, $\vec{y}_M = Y^M(b_M)$, $f_M = Y^M_{k_1-1}\llbracket a_M,b_M\rrbracket$, and $g_M = Y^M_{k_2+1}\llbracket a_M,b_M\rrbracket$. By the Skorohod representation theorem, we may assume without loss of generality that $\mathcal{Y}^M \to \mathcal{Y}$ uniformly on $[a,b]$ as $M\to\infty$, $\mathbb{P}$-a.s. This implies in particular that $M^{-1/3}\vec{x}_M\to \vec{x}$, $M^{-1/3}\vec{y}_M\to \vec{y}$, and $M^{-1/3}f_M(tM^{2/3})\to f(t)$, $M^{-1/3}g_M(tM^{2/3})\to g(t)$ uniformly in $t\in [a,b]$, $\mathbb{P}$-a.s. Moreover, since $a_M M^{-2/3}\to a$, $b_M M^{-2/3}\to b$, we may assume $M$ is sufficiently large so that $t_i M^{2/3} \notin [a_M, b_M]$ for all $i\notin S$ with $n_i\in K$. 
		
		Now let $Z^M$ have law $\mathbb{P}_{\mathrm{avoid},\mathrm{walk},1/M}^{k_1,k_2,a_M,b_M,\vec{x}_M,\vec{y}_M,f_M,g_M}$, and define $\mathcal{Z}^M(t) = M^{-1/3} Z^M(tM^{2/3})$ for $t\in[a,b]$. By the Gibbs property with respect to discrete area tilts for $Y^M$ (see Remark \ref{discgibbs})
		\[
		\mathbb{E} \left[ \prod_{i=1}^m h_i(\mathcal{Y}^M_{n_i}(t_i)) \right] = \mathbb{E} \left[\prod_{i\notin S} h_i(\mathcal{Y}^M_{n_i}(t_i)) \cdot \mathbb{E}_{\mathrm{avoid},\mathrm{walk},1/M}^{k_1,k_2,a_M,b_M,\vec{x}_M,\vec{y}_M,f_M,g_M}\left[\prod_{j\in S} h_i(\mathcal{Z}^M_{n_j}(t_j))\right]\right].
		\]
		By Lemma \ref{walkconv}, the law of $\mathcal{Z}^M$ converges weakly to $\mathbb{P}^{k_1,k_2,a,b,\vec{x},\vec{y},f,g}_{\mathrm{avoid},\vec{A}}$ as $M\to\infty$, with $A_i = 1$ for all $i \in \llbracket k_1, k_2 \rrbracket$, and so we have $\mathbb{P}$-a.s.
		$$\lim_{M \rightarrow \infty} \mathbb{E}_{\mathrm{avoid},\mathrm{walk},1/M}^{k_1,k_2,a_M,b_M,\vec{x}_M,\vec{y}_M,f_M,g_M}\left[\prod_{j\in S} h_i(\mathcal{Z}^M_{n_j}(t_j))\right] = \mathbb{E}_{\mathrm{avoid},\vec{A}}^{k_1,k_2,a,b,\vec{x},\vec{y},f,g} \left[ \prod_{j\in S} h_j(\mathcal{Q}_{n_j}(t_j)) \right].$$
		From the last two equations and the bounded convergence theorem we conclude
		\[
		\lim_{M\to\infty}\mathbb{E} \left[ \prod_{i=1}^m h_i(\mathcal{Y}^M_{n_i}(t_i)) \right]  = \mathbb{E}\left[\prod_{i\notin S} h_i(\mathcal{Y}_{n_i}(t_i)) \cdot \mathbb{E}_{\mathrm{avoid},\vec{A}}^{k_1,k_2,a,b,\vec{x},\vec{y},f,g} \left[ \prod_{j\in S} h_j(\mathcal{Q}_{n_j}(t_j)) \right]\right].
		\]
		In combination with \eqref{fsbgplim}, this proves \eqref{fsbgpmain}.
		
	\end{proof}

	%
	%
	
	\section{Proof of Theorem \ref{main}}\label{sec4}
	
	In this section we prove Theorem \ref{main}; we split the argument into two steps. In the first step, which is the content of Section \ref{sec4.1}, we show that $\widetilde{\mathcal{X}}^N$ converges to the Airy line ensemble in the sense of finite-dimensional distributions as $N \rightarrow \infty$. In the second step, which is the content of Section \ref{sec4.2}, we improve the convergence to that of uniform convergence over compact sets by utilizing the Gibbs property enjoyed by $\mathcal{X}^N$ in view of Proposition \ref{DFSGibbs}.
	
	%
	%
	
	\subsection{Finite-dimensional convergence}\label{sec4.1} Let us fix $t_1, \dots, t_m \in \mathbb{R}$ such that $t_1 < t_2 < \cdots < t_m.$ From \cite[Section 3.3]{FS} we know that the point process 
	\begin{equation*}
		\eta_N := \sum_{n = 1}^m \sum_{k = 1}^N \delta_{(\mathcal{X}^N_k(t_n), t_n)}
	\end{equation*}
	is a determinantal point process on the space $(0,\infty) \times \{t_1, \dots, t_m\}$. Here, the measure on $(0, \infty) \times \{t_1, \dots, t_m\}$ is the product of the Lebesgue measure on $(0, \infty)$ and the counting measure on $\{t_1, \dots,t_m\} $, and the correlation kernel is given by
	\begin{equation}\label{kernel}
		K_N(x_i,t_i; x_j,t_j) = \begin{dcases} \sum_{k=1}^N e^{-\frac{1}{2}\omega_k (t_j-t_i)} \varphi_k(x_i)\varphi_k(x_j), & t_i\geq t_j,\\
			-\sum_{k=N+1}^\infty e^{-\frac{1}{2}\omega_k (t_j-t_i)} \varphi_k(x_i)\varphi_k(x_j), & t_i < t_j,
		\end{dcases}
	\end{equation}
	where $\varphi_k$ are given by
	$$\varphi_k(x) = \frac{\mathrm{Ai}(x-\omega_k)}{(-1)^{k-1} \mathrm{Ai}'(-\omega_k)} \mbox{ for $k \geq 1$,}$$
	and $-\omega_1 > -\omega_2 > \cdots$ are the zeros of the Airy function $\mathrm{Ai}$. For a comprehensive background on determinantal point processes we refer the reader to \cite{DetPP}.
	
	Using a simple change of variables and conjugation, see \cite[(3.31)]{FS}, the point process 
	\begin{equation}\label{DPP}
		\tilde{\eta}_N := \sum_{n = 1}^m \sum_{k = 1}^N \delta_{(\widetilde{\mathcal{X}}^N_k(t_n), t_n)}
	\end{equation}
	is a determinantal point process on the space $(-c_1 N^{2/3}, \infty) \times \{t_1, \dots, t_n\}$, with the product of Lebesgue and counting measure, and correlation kernel
	\begin{equation}\label{Scaledkernel}
		\tilde{K}_N(\xi_i,\tau_i; \xi_j,\tau_j) = e^{(\tau_j - \tau_i) c_1 N^{2/3}} K_N(c_1 N^{2/3} + \xi_i, 2\tau_i; c_1 N^{2/3} + \xi_j, 2\tau_j).
	\end{equation}
	
	We also have from the definition of the Airy line ensemble $\mathcal{A}^{\mathrm{Airy}}$ in \cite[Theorem 3.1]{CH14} that the point process 
	\begin{equation}\label{APP}
		\eta_{\mathrm{Airy}} := \sum_{n = 1}^m \sum_{k = 1}^\infty \delta_{(\mathcal{A}^{\mathrm{Airy}}_k(t_n), t_n)}
	\end{equation}
	is a determinantal point process on $\mathbb{R} \times \{t_1, \dots, t_m\}$, with the product of Lebesgue and counting measure, and correlation kernel given by the {\em extended Airy kernel}
	\begin{equation}\label{AiryKernel}
		A(\tau_i, \xi_i; \tau_j, \xi_j) = \begin{cases}\int_0^\infty e^{-\lambda (\tau_i - \tau_j) }\mathrm{Ai}(\xi_i + \lambda) \mathrm{Ai}(\xi_j + \lambda) \,\mathrm{d}\lambda, &\tau_i \geq \tau_j,\\ -\int_{-\infty}^0 e^{-\lambda (\tau_i - \tau_j) }\mathrm{Ai}(\xi + \lambda) \mathrm{Ai}(\xi_j + \lambda) \,\mathrm{d}\lambda, &\tau_i < \tau_j.\end{cases}
	\end{equation}
	The extended Airy kernel was introduced by \cite{Spohn} in the context of the polynuclear growth model.
	
	We require the following result, which is a direct consequence of \cite[Propositions 3.3 and 3.5]{FS}.
	\begin{lemma} For any $s,t \in \mathbb{R}$ and $a \in \mathbb{N}$ we have 
		\begin{equation}\label{kernelConv}
			\lim_{N \rightarrow \infty} \sup_{x,y \in [-a, a]} \big|\tilde{K}_N(x,s; y,t) - A(x,s;y,t) \big| = 0.    
		\end{equation}
		For any $t \in \mathbb{R}$ and $b \in \mathbb{R}$, there exist constants $N_1, C_1 > 0$ such that 
		\begin{equation}\label{kernelTail}
			\big|\tilde{K}_N(x,t; y,t) \big| \leq C_1  e^{-x - y} \mbox{ if } x,y \geq b \mbox{ and } N \geq N_1.
		\end{equation}
	\end{lemma}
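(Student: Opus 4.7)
My plan is to derive both estimates \eqref{kernelConv} and \eqref{kernelTail} by a direct translation of \cite[Propositions 3.3 and 3.5]{FS} into the notation of the present paper, using the change-of-variables identity \eqref{Scaledkernel} as the bridge between the two formulations. Since the statement is advertised as a ``direct consequence'' of those results, the work is essentially bookkeeping: matching index conventions, the factor-of-$2$ time rescaling in \eqref{FSrescaled}, and the spatial shift by $c_1 N^{2/3}$.

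First I would recall the scaling that defines $\tilde{K}_N$. The definition of $\widetilde{\mathcal{X}}^N$ in \eqref{FSrescaled} pairs a time-rescaling $t\mapsto 2t$ with a spatial shift $x\mapsto x-c_1 N^{2/3}$, and these transformations, applied to the determinantal point process $\eta_N$ whose kernel is $K_N$ in \eqref{kernel}, produce the determinantal point process $\tilde\eta_N$ in \eqref{DPP} with kernel $\tilde K_N$ in \eqref{Scaledkernel}. The prefactor $e^{(\tau_j-\tau_i)c_1 N^{2/3}}$ is a conjugation that does not affect the determinants and is inserted so that the exponential weights $e^{-\tfrac{1}{2}\omega_k\cdot 2(\tau_j-\tau_i)}$ in \eqref{kernel} can be rewritten, after the rescaling $\omega_k\mapsto c_1 N^{2/3}+(\text{scaled zero})$ from the asymptotics of Airy zeros, in the edge-scaling form used to obtain the extended Airy kernel \eqref{AiryKernel}.

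For \eqref{kernelConv}, I would invoke \cite[Proposition 3.3]{FS}, which provides precisely the pointwise/uniform on compacts convergence of the kernel $K_N$ evaluated at the scaled arguments $(c_1 N^{2/3}+\xi_i, 2\tau_i)$ and $(c_1 N^{2/3}+\xi_j, 2\tau_j)$, after conjugation by the time-dependent exponential prefactor, to the extended Airy kernel $A(\tau_i,\xi_i;\tau_j,\xi_j)$. One only needs to verify that the definition of $\tilde K_N$ in \eqref{Scaledkernel} agrees with the quantity for which \cite[Proposition 3.3]{FS} proves uniform convergence on compact sets in $(x,y)$, and then transfer the uniformity in $[-a,a]^2$ directly. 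For \eqref{kernelTail}, I would similarly invoke \cite[Proposition 3.5]{FS}, which gives the exponential bound $|\tilde K_N(x,t;y,t)|\leq C_1 e^{-x-y}$ on the diagonal-time kernel for $x,y\geq b$ and $N$ large; this is the standard Airy-type tail estimate that the $\mathrm{FS}$ paper establishes via steepest descent on the contour integral representation of $K_N$.

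The only real subtlety to check is the index/reversal convention: the definition of $\mathcal{X}^N$ in this paper reverses the order of curves compared to \cite{FS} (as noted in Section \ref{sec1.2}), so one has to confirm that \cite[Propositions 3.3 and 3.5]{FS} are stated for (or can be transposed to) the convention used to write $K_N$ in \eqref{kernel}. This is a straightforward relabeling. With that verified, the two displayed inequalities follow immediately, and the proof concludes. I do not anticipate any real obstacle beyond this reconciliation of conventions, since the core asymptotic analysis has already been carried out in \cite{FS}.
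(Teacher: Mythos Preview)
Your proposal is correct and matches the paper's approach: the paper does not give a proof at all but simply states that the lemma is a direct consequence of \cite[Propositions 3.3 and 3.5]{FS}, and your bookkeeping of the scaling identity \eqref{Scaledkernel} and the index-reversal convention is exactly the reconciliation needed to justify that citation. If anything, your write-up is more explicit than the paper's one-line invocation.
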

	
	We now turn to the main result of the section.
	\begin{proposition}\label{FDCProp} Assume the same notation as in Theorem \ref{main}. Then, $\widetilde{\mathcal{X}}^N$ converges weakly as $N\to\infty$ to the Airy line ensemble $\mathcal{A}^{\mathrm{Airy}}$ in the sense of finite-dimensional distributions.
	\end{proposition}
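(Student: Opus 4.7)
The plan is to deduce the finite-dimensional convergence of the line ensembles from weak convergence of the associated determinantal point processes $\tilde\eta_N\to\eta_{\mathrm{Airy}}$, following the template of \cite[Proposition 3.1]{DNV} as indicated in the excerpt.

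The first step is to rewrite finite-dimensional events in terms of counts. Since the curves of both $\widetilde{\mathcal{X}}^N$ and $\mathcal{A}^{\mathrm{Airy}}$ are almost surely strictly ordered at each fixed time, for any index $i\in\mathbb{N}$, time $t_n\in\{t_1,\dots,t_m\}$, and $x\in\mathbb{R}$ one has the deterministic identity
$$\{\widetilde{\mathcal{X}}^N_i(t_n)\le x\}=\{\tilde\eta_N([x,\infty)\times\{t_n\})\le i-1\},$$
and the analogous identity for $\mathcal{A}^{\mathrm{Airy}}$ and $\eta_{\mathrm{Airy}}$. Consequently, convergence in distribution of any joint vector $(\widetilde{\mathcal{X}}^N_{i_j}(t_{n_j}))_{j=1}^r$ reduces by inclusion--exclusion to convergence of the joint distribution of the counts $\tilde\eta_N([x_j,\infty)\times\{t_{n_j}\})$ for arbitrary $x_j\in\mathbb{R}$.

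The second step is to establish convergence of these joint count probabilities via Fredholm determinants. For a determinantal point process with kernel $\tilde K_N$, the joint count probabilities on a finite union of measurable sets $B$ are recovered as coefficients in the power series expansion at $z=1$ of $\det(I-z\tilde K_N|_B)$. On a bounded truncation $B_L=\bigcup_j[x_j,L]\times\{t_{n_j}\}$, the uniform kernel convergence \eqref{kernelConv} immediately yields $\det(I-z\tilde K_N|_{B_L})\to\det(I-zA|_{B_L})$ for all $z$ in a complex neighborhood of $1$. To let $L\to\infty$, the exponential right-tail bound \eqref{kernelTail} provides a uniform-in-$N$ trace-norm control on the contribution of the tail $\bigcup_j[L,\infty)\times\{t_{n_j}\}$, while the classical exponential decay of the extended Airy kernel $A$ controls the limit side. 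This allows interchange of $N\to\infty$ and $L\to\infty$, producing convergence of the Fredholm determinants on the half-infinite region $B=\bigcup_j[x_j,\infty)\times\{t_{n_j}\}$, and hence of the joint count probabilities identified in Step one.

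Combining the two steps gives the finite-dimensional convergence $(\widetilde{\mathcal{X}}^N_{i_j}(t_{n_j}))\Rightarrow(\mathcal{A}^{\mathrm{Airy}}_{i_j}(t_{n_j}))$ for arbitrary choices of indices and times, which is the content of the proposition. The main technical point is the passage from compact rectangles to half-infinite regions; this is precisely what the tail estimate \eqref{kernelTail} is designed to control, and the remaining ingredients (the order-to-count reduction of Step one and the Fredholm-determinant expansion of Step two) are routine once one has both pointwise kernel convergence on compacts and uniform exponential tail decay.
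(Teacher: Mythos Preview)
Your approach is essentially the same as the paper's: both reduce finite-dimensional convergence of the line ensembles to convergence of the determinantal point processes $\tilde\eta_N$, using the kernel convergence \eqref{kernelConv} on compacts together with the tail estimate \eqref{kernelTail}. The paper phrases this (following \cite{DNV} verbatim) via joint vague convergence of the marginal measures $P_j^N = \sum_k \delta_{\widetilde{\mathcal{X}}^N_k(t_j)}$ plus the uniform tail condition $\lim_M\limsup_N \mathbb{E}[P_j^N[M,\infty)]= 0$, while you phrase it via Fredholm determinants on truncated regions $B_L$ and then let $L\to\infty$; these are equivalent organizations of the same argument.

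One technical point deserves care. Your phrase ``trace-norm control on the contribution of the tail'' is imprecise: the bound \eqref{kernelTail} is stated only for the \emph{equal-time} kernel $\tilde K_N(x,t;y,t)$, so it does not directly control the trace norm of $\tilde K_N$ restricted to a multi-time region $\bigcup_j[L,\infty)\times\{t_{n_j}\}$ (the off-diagonal-in-time blocks are not covered, and the extended kernel is not Hermitian). What actually works---and what the paper uses---is the first-moment bound on particle counts:
\[
\mathbb{P}\bigl(\tilde\eta_N([L,\infty)\times\{t_j\})\ge 1\bigr)\le\mathbb{E}\bigl[\tilde\eta_N([L,\infty)\times\{t_j\})\bigr]=\int_L^\infty\tilde K_N(x,t_j;x,t_j)\,\mathrm{d}x,
\]
which only requires the diagonal. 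This lets you replace the half-infinite counts by the truncated counts on $[x_j,L]$ up to an error that is uniformly small in $N$, after which your Step two goes through.
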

	\begin{proof} The proof we present is a straightforward adaptation of the argument used in \cite{DNV} to prove finite-dimensional convergence in Theorem 1.5 within the same paper. As such, we will be brief and only emphasize the straightforward modifications that need to be made.
		
		Let us define the random measures on $\mathbb{R}$
		$$P^N_j = \sum_{k = 1}^N \delta_{\widetilde{\mathcal{X}}^N_k(t_j)} \mbox{ for } j = 1, \dots, m.$$
		As explained in \cite[pp.\! 16-17]{DNV}, to conclude the statement of the proposition it suffices to show that 
		\begin{enumerate}
			\item The measures $\{P_j^N\}_{j = 1}^m$ converge jointly in distribution with respect to the vague topology to the random measures $P_j^{\mathrm{Airy}} = \sum_{k = 1}^{\infty} \delta_{\mathcal{A}^{\mathrm{Airy}}_k(t_j)}$, $j = 1, \dots, m$ as $N \rightarrow \infty$. 
			\item For each $j = 1, \dots, m$ we have 
			\begin{equation}\label{tailProb}
				\lim_{M \rightarrow \infty} \limsup_{N \rightarrow \infty} \mathbb{E} \left[ P_j^N[M,\infty) \right] = 0.
			\end{equation}
		\end{enumerate}
		For background on the vague topology we refer the reader to \cite[Chapter 4]{Kallenberg}.
		
		For establishing the first point above we can repeat verbatim the arguments in \cite{DNV}\textemdash we only need to replace the measures $\nu_{N,i}$ in that paper with the Lebesgue measure on $(-c_1 N^{2/3}, \infty).$ The essential ingredients we need are that $\nu_{N,i}$ converge vaguely to the Lebesgue measure on $\mathbb{R}$ (which is clear in our case) and that $\tilde{K}_N(x,s; y,t)$ converges for fixed $s,t$ uniformly over compact sets (in $x,y$) to $A(x,s;y,t)$, which holds from (\ref{kernelConv}).
		
		For the second point, we note that for each $M \in \mathbb{R}$, $j = 1, \dots, m$ and all large $N$
		$$\mathbb{E} \left[ P_j^N[M,\infty) \right] = \int_{M}^\infty \tilde{K}_N(x,t_j; x,t_j) \,\mathrm{d}x \leq (C_1/2)e^{-2M},$$
		where in the last inequality we used (\ref{kernelTail}). The latter implies (\ref{tailProb}) and the proposition is proved.
	\end{proof}
	
	%
	%
	
	\subsection{Uniform convergence over compacts}\label{sec4.2} In this section we conclude the proof of Theorem \ref{main}. The main idea is to apply \cite[Theorem 3.8]{CH14}, which allows one to utilize the Brownian Gibbs property to improve finite-dimensional convergence to one that is uniform over compact sets.
	
	\begin{proof}[Proof of Theorem \ref{main}]
		
		Let $\mathcal{L}^N = \{\mathcal{L}_i^N\}_{i = 1}^{N+1}$ be the ensemble from Proposition \ref{DFSGibbs}. If we define 
		$$\hat{\mathcal{L}}^N_i(t) = 2^{-1/2} \mathcal{L}_i^N(2t) - c_1 2^{-1/2}  N^{2/3}  \mbox{ for $i = 1, \dots, N+1$ and $t \in \mathbb{R}$},$$
		then by Lemma \ref{ScalingAGP} we know that $\tilde{\mathcal{L}}^N$ satisfies the partial Brownian Gibbs property with respect to $\vec{A}$-area tilts with $A_i = 2^{1/2}$ for $i \in \llbracket 1, N+1 \rrbracket$ as an $\llbracket 1, N +1\rrbracket$-indexed line ensemble on $\mathbb{R}$. From Lemma \ref{GibbsPar} with $A=2^{1/2}$ we see that 
		\begin{equation}\label{ASD1}
			\begin{split}
				\tilde{\mathcal{L}}^N_i(t) := \hat{\mathcal{L}}^N_i(t) - 2^{-1/2} \, t^2 \mbox{ for } i = 1, \dots, N + 1 \mbox{ and } t \in \mathbb{R},
			\end{split}
		\end{equation}
		satisfies the partial Brownian Gibbs property, cf. \cite[Definition 2.7]{DM}, as an $\llbracket 1, N +1\rrbracket$-indexed line ensemble on $\mathbb{R}$. Using the definition of $\widetilde{\mathcal{X}}^N$ in (\ref{FSrescaled}) we have
		\begin{equation}\label{ASD2}
			\begin{split}
				\tilde{\mathcal{L}}^N_i(t) := 2^{-1/2} \left( \widetilde{\mathcal{X}}_i^N(t) - t^2 \right) \mbox{ for } i = 1, \dots, N \mbox{ and } t \in \mathbb{R}.
			\end{split}
		\end{equation}
		From Proposition \ref{FDCProp} we know that $\tilde{\mathcal{L}}^N$ converges in the finite-dimensional sense as $N \rightarrow \infty$ to the $\mathbb{N}$-indexed line ensemble $ 2^{-1/2} \big(\mathcal{A}^{\mathrm{Airy}}_i(t) - t^2 \big) $, which is precisely the parabolic Airy line ensemble $\mathcal{L}^{\mathrm{Airy}}$ from Section \ref{sec1.2}.
		
		The latter observation shows that $\tilde{\mathcal{L}}^N$ satisfy hypotheses $(H2)_{k,T}$ in \cite[Definition 3.3]{CH14}, and since $\mathcal{L}^{\mathrm{Airy}}$ is non-intersecting almost surely, we see that they also satisfy hypotheses $(H3)_{k,T}$ in the same definition. Finally, we mention that $\tilde{\mathcal{L}}^N$ satisfy hypotheses $(H1)_{k,T}$ in \cite[Definition 3.3]{CH14} if one replaces ``Brownian Gibbs property'' with ``partial Brownian Gibbs property.'' The distinction between the two is mild and has to do with the $(N+1)$-st curve in $\tilde{\mathcal{L}}^N$ being deterministic and not itself Brownian, see \cite[Remark 2.9]{DM} for a careful explanation. The important point is that the results in \cite[Section 3]{CH14} all hold if one replaces the ``Brownian Gibbs property'' with the ``partial Brownian Gibbs property'' as their argument never uses properties of the bottom curves in the ensembles. Overall, we see that the conditions of \cite[Theorem 3.8]{CH14} are satisfied and we conclude that $\tilde{\mathcal{L}}^N$ converge uniformly over compacts to $\mathcal{L}^{\mathrm{Airy}}$ as $N \rightarrow \infty$. We mention that the convergence statement is not in \cite[Theorem 3.8]{CH14} but is implied by \cite[Proposition 3.6]{CH14}. 
		
		The uniform over compacts convergence of $\tilde{\mathcal{L}}^N$ to $\mathcal{L}^{\mathrm{Airy}}$ as $N \rightarrow \infty$, equation (\ref{ASD2}), and the relation $\mathcal{L}^{\mathrm{Airy}}_i(t) = 2^{-1/2} \big(\mathcal{A}^{\mathrm{Airy}}_i(t) - t^2 \big)$ for $i \in \mathbb{N}$ together imply the statement of Theorem \ref{main}.
		
	\end{proof}

	\bibliographystyle{alpha}
	\bibliography{bib}

\end{document}